\numberwithin{equation}{section}
\newtheorem{theorem}{Theorem}[section]
\newtheorem{corollary}[theorem]{Corollary}
\newtheorem{lemma}[theorem]{Lemma}
\newtheorem{prop}[theorem]{Proposition}
\newtheorem*{thmA}{Theorem A}
\theoremstyle{definition}
\newtheorem{remark}[theorem]{Remark}
\theoremstyle{definition}
\theoremstyle{definition}
\def\dashint{\operatorname%
{\,\,\text{\bf-}\kern-.98em\DOTSI\intop\ilimits@\!\!}}
\def\\det{\text{det}}
\def\.5{\frac{1}{2}}
\newcommand{\RN}[1]{%
  \textup{\uppercase\expandafter{\romannumeral#1}}%
}
\renewcommand{\epsilon}{\varepsilon}
\newcounter{marnote}
\begin{document}

% ------------------------------------------------------------------------

\title[Asymptotics for the perfect conductivity problem]{Asymptotics for the electric field concentration in the perfect conductivity problem}

\author[H.G. Li]{Haigang Li}
\address[H.G. Li]{School of Mathematical Sciences, Beijing Normal University, Laboratory of Mathematics and Complex Systems, Ministry of Education, Beijing 100875, China. }
\email{hgli@bnu.edu.cn}
\thanks{H.G. Li was partially supported by  NSFC (11571042, 11631002, 11971061) and BJNSF (1202013)}

\date{\today} % delete this line to display the current date

\maketitle

% ------------------------------------------------------------------------

\begin{abstract}
In the perfect conductivity problem of composite material, the electric field concentrates in a narrow region in between two inclusions and always becomes arbitrarily large when the distance between inclusions tends to zero. To characterize such singular behavior, we capture the leading term of the gradient and reveal that the blow-up rates are determined by their relative convexity of the two adjacent inclusions. On the other hand, a blow-up factor, which is a linear functional of boundary data, is found to determine the blow-up will occur or not.  
\end{abstract}

\section{Introduction}

\subsection{Background}
In composite materials, the inclusion are frequently located very closely and even touching. Especially, in high-contrast fiber-reinforced composites, it is a common phenomenon that high concentration of extreme electric field or stress field occurs in the narrow regions between two adjacent inclusions. The purpose of this paper is to investigate the asymptotic behavior of the electric field in the perfect conductivity problem when the distance between inclusions tends to zero. The conductivity problem can be modeled by the following boundary problem of the scalar equation with piecewise constant coefficients
\begin{equation}\label{equk}
\begin{cases}
\mathrm{div}\Big(a_{k}(x)\nabla{u}_{k}\Big)=0&\mbox{in}~D,\\
u_{k}=\varphi(x)&\mbox{on}~\partial{D},
\end{cases}
\end{equation}
where $D$ is a bounded open set in $\mathbb R^{n}$, $n\geq2$, including two inclusions $D_{1}$ and $D_{2}$ with $\varepsilon$ apart, $\varphi\in{C}^{2}(\partial{D})$ is given, and
$$a_{k}(x)=
\begin{cases}
k\in[0,1)\cup(1,\infty]&\mbox{in}~D_{1}\cup{D}_{2},\\
1&\mbox{in}~\Omega=D\setminus\overline{D_{1}\cup{D}_{2}}.
\end{cases}
$$
The gradient of the potential $u$ represents the electric field, $a_{k}(x)$ is the conductivity, which is a constant on the fibers, and a different constant on the matrix. When the conductivity of inclusions degenerate into infinity, we call it as {\em{the perfect conductivity problem}}. It is important from a practical point of view to know whether $|Du|$ can be arbitrarily large as the inclusions get closer to each other. Motivated by the celebrated work of Babu\v{s}ka, Andersson, Smith, and Levin \cite{basl} where they numerically analyzed the initiation and growth of damage in composite materials, in which the inclusions are frequently spaced very closely and even touching, there have been many important works on the gradient estimates for solutions of elliptic and parabolic equations and systems arising from composite materials; see, for instance, \cite{bv,d,dl,fknn,jlx,kim,ll,llby,ln,lv} and the references therein.

When $k$ is away from 0 and $\infty$, the gradient of the solution of \eqref{equk}, $|\nabla{u}_{k}|$, is bounded independently of the distance $\varepsilon$. Bonnetier and Vogelius \cite{bv} first obtained the $W^{1,\infty}$ estimate of $u_{k}$ for two touching disks $D_{1}$ and $D_{2}$ in dimension two, which improved a classical regularity result due to De Giorgi and Nash \cite{DeGi,nash}, which asserts that the $H^{1}$ weak solution is in the H\"{o}lder class for $L^{\infty}$ coefficients. Of course, the bound in \cite{bv} depends on the value of $k$. Li and Vogelius  \cite{lv} and Li and Nirenberg \cite{ln} extended such boundedness result to general divergence form second order elliptic equations and systems with piecewise H\"older continuous coefficients, and they proved that $|\nabla{u}_{k}|$ remains bounded when $\varepsilon$ tends to zero. 

Actually, this is a bi-parameter problem, including two independent parameters: the contrast $k$ and the distance $\varepsilon$. In order to study the role of $\varepsilon$ played in such kind of concentration phenomenon, we consider another limit case with $k =+\infty$, the perfect conductivity problem:
\begin{equation}\label{equinfty2}
\begin{cases}
\Delta{u}=0&\mbox{in}~\Omega,\\
u=C_{i}&\mbox{on}~\overline{D}_{i},~i=1,2,\\
\int_{\partial{D}_{i}}\frac{\partial{u}~~}{\partial\nu^{-}}=0&i=1,2,\\
u=\varphi(x)&\mbox{on}~\partial{D},
\end{cases}
\end{equation}
where $C_{1}$ and $C_{2}$ are some constants to be uniquely determined, $\varphi\in{C}^{2}(\partial{D})$, and for $x\in\partial{D}_{i}$
$$\frac{\partial{u}~~}{\partial\nu^{-}}(x):=\lim_{t\rightarrow0^{+}}\frac{u(x)-u(x+t\nu)}{t}.$$
Here and throughout this paper $\nu$ is the outward unit normal to
the domain. It has been proved that the generic blow-up rate of $|\nabla{u}|$ is $\varepsilon^{-1/2}$ in two dimensions \cite{akl,aklll,y1,y2,akllz,ly}, $(\varepsilon|\log\varepsilon|)^{-1}$ in three dimensions \cite{bly1,ly,kly2}, and $\varepsilon^{-1}$ in higher dimensions \cite{bly1}. Similar results for Lam\'{e} system with partially inifinite coefficients were established in \cite{bjl,bll,bll2}, for $p$-Laplace equation in dimension two in \cite{gn}. More earlier work for the blow-up rate of a special solution with two identical circular inclusions was shown to be $\varepsilon^{-1/2}$, see \cite{bc,m,keller}. 

Bao, Li and Yin \cite{bly1} introduced a linear functional $Q_{\varepsilon}[\varphi]$ and obtained the optimal bounds
$$
\frac{\rho_{n}(\varepsilon)|Q_{\varepsilon}[\varphi]|}{C\varepsilon}\leq\|\nabla{u}\|_{L^{\infty}(\Omega)}\leq\frac{C\rho_{n}(\varepsilon)|Q_{\varepsilon}[\varphi]|}{\varepsilon}+C\|\varphi\|_{C^{2}(\partial{D})},
$$
where
\begin{equation}\label{def_rhon}
\rho_{n}(\varepsilon)=
\begin{cases}
\sqrt{\varepsilon}~~&\mbox{for}~n=2;\\
|\log\varepsilon|^{-1}&\mbox{for}~n=3.
\end{cases}
\end{equation}
If $|Q_{\varepsilon}[\varphi]|$ has a strictly positive lower bound independent of $\varepsilon$, then these inequality will show these blow-up rates are optimal. From the view of practical application in engineering,  it is desirous and more important to know how to capture the leading term of such blow-up. Recently a better understanding of the stress concentration has been obtained in \cite{kly,ackly} that an asymptotic behavior of $\nabla{u}$ has been characterized by the singular function $q_{\varepsilon}$ associated with $D_{1}$ and $D_{2}$ in dimension two, and the asymptotic behavior of the stress concentration factor is also considered in \cite{kly}. Ammari, Ciraolo, Kang, Lee, Yun \cite{ackly} extend the result in \cite{kly} to the case that inclusions $D_{1},D_{2}$ are strictly convex simply connected domain in $\mathbb{R}^{2}$. For two adjacent spherical inclusions in $\mathbb{R}^{3}$ was studied by Kang, Lim and Yun \cite{kly2} and Li, Wang and Xu \cite{lwx}. Bonnetier and Triki \cite{bt} derived the asymptotics of the eigenvalues of the Poincar\'e variational problem as the distance between the inclusions tends to zero. Here it is also worth mentioning that Berlyand, Gorb and Novikov \cite{bgn} used a network approximation to estimate the global stress in a composite with densely packed spherical inclusions. 

In this paper, we give an essentially complete description of the gradient asymptotic expansion for arbitrary convex inclusions in all dimensions. The method is quite different with that used in \cite{ackly,kly,kly2}. Motivated by the decomposition in \cite{lx} for the boundary estimates, we here decompose the solution $u$ of \eqref{equinfty2} as follows
\begin{equation}\label{decomposition_u}
u(x)=(C_{1}-C_{2})v_{1}(x)+v_{b}(x),\quad\mbox{in}~~\Omega,
\end{equation}
where $v_{1}$ and $v_{b}$ are, respectively, the solutions of
\begin{equation}\label{equv1}
\begin{cases}
\Delta{v}_{1}=0&\mbox{in}~\Omega,\\
v_{1}=1&\mbox{on}~\partial{D}_{1},\\
v_{1}=0&\mbox{on}~\partial{D}_{2}\cup\partial{D},\end{cases}
\quad\mbox{and}\quad
\begin{cases}
\Delta{v}_{b}=0&\mbox{in}~\Omega,\\
v_{b}=C_{2}&\mbox{on}~\partial{D}_{1}\cup\partial{D}_{2},\\
v_{b}=\varphi(x)&\mbox{on}~\partial{D}.
\end{cases}
\end{equation}
It follows from \eqref{decomposition_u} that
\begin{equation}\label{decom_u2}
\nabla{u}=(C_{1}-C_{2})\nabla{v}_{1}+\nabla{v}_{b}.
\end{equation}
This decomposition comes with a significant advantage: $\nabla{v}_{1}$ is a singular part with an intuitive singularity $\varepsilon^{-1}$, while $\nabla{v}_{b}$ is a bounded part. Thus, the main reason to cause the difference of the rate of the blow-up lies in the term $|C_{1}-C_{2}|$. It turns out that it depends on the dimension $n$ and the geometry of the inclusions. On the other hand, the bounded term $\nabla{v}_{b}$ is also important, because it is closely related to the blow-up factor $\mathcal{B}_{0}[\varphi]$, which decides whether the blow-up will occur or not. For more details, see Proposition \ref{prop2} below.

\subsection{Notations and Main Results}
We now proceed to state the main results of this paper. To do so we need to make our notation and assumptions more precise. We use $x=(x',x_{n})$ to denote a point in $\mathbb{R}^{n}$, $n\geq2$, $x'=(x_{1},x_{2},\cdots,x_{n-1})$. We assume that $\partial{D}$ is of $C^{2,\alpha}$, $0<\alpha<1$. Let $D_{1}^{0}$ and $D_{2}^{0}$ be a pair of (touching) convex subdomains of $D$  and far away from $\partial D$, such that
$$D_{1}^{0}\subset\{(x',x_{n})\in\mathbb R^{n}| x_{n}>0\},\quad D_{2}^{0}\subset\{(x',x_{n})\in\mathbb R^{n}| x_{n}<0\},$$
with $x_{n}=0$ as their common tangent plane, and
$$\partial D_{1}^{0}\cap\partial D_{2}^{0}=\{(0',0)\},\qquad \mathrm{dist}(D^{0}_{1}\cup{D}^{0}_{2},\partial{D})>\kappa_{0},$$
where $\kappa_{0}>1$ is a constant. We further assume that the $C^{2,\alpha}$ norms of
$\partial{D}_{i}$ $(i=1,2)$ are bounded by some constants. By translating $D_{1}^{0}$ by a positive number $\varepsilon$ along $x_{n}$-axis, while $D_{2}^{0}$ is fixed, we obtain $D_{1}^{\varepsilon}$, that is,
$$D_{1}^{\varepsilon}:=D_{1}^{0}+(0',\varepsilon).$$
When there is no possibility of confusion, we drop the superscripts and denote
$$D_{1}:=D_{1}^{\varepsilon},\quad D_{2}:=D_{2}^{0}, \quad\mbox{and}~~\Omega:=D\setminus\overline{D_{1}\cup{D}_{2}}.$$

We may assume that the points $P_{1}\in\partial{D}_{1}$ and $P_{2}\in\partial{D}_{2}$ satisfy
\begin{equation*}\label{P1P2}
P_{1}=\left(0',\varepsilon\right),\quad\,P_{2}=\left(0',0\right).
\end{equation*}
Fix a small universal constant $R_{0}<1$ such that the portions of
 $\partial{D}_{i}$  near $P_{i}$  can be parameterized by $(x',\varepsilon+h_{1}(x'))$ and $(x',h_{2}(x'))$,
respectively, that is,
$$x_{n}=\varepsilon+h_{1}(x'),~\mbox{and}~\,x_{n}=h_{2}(x'), \quad\mbox{for}~~x'\in{B}'_{2R_{0}}:=\left\{x'\in\mathbb{R}^{n-1}~\Big|~|x'|<2R_{0}\right\}.$$
Moreover, by
the convexity assumptions on $\partial{D}_{i}$, we further assume that functions
  $h_{1}$ and $h_{2}$  satisfy
\begin{equation}\label{h1h200}
\varepsilon+h_{1}(x')>h_{2}(x'),\quad\mbox{for}~~|x'|<2R_{0},
\end{equation}
\begin{equation}\label{h1h20}
h_{1}(0')=h_{2}(0')=0,\quad\nabla_{x'}h_{1}(0')=\nabla_{x'}h_{2}(0')=0,
\end{equation}
and for some constant $\kappa_{1}>0$, and for any $\xi\in\mathbb{R}^{n-1}\setminus\{0'\}$, 
\begin{equation}\label{h1h21}
\xi^{T}\nabla^{2}_{x'}h_{1}(0')\xi\geq\kappa_{1}|\xi|^{2}>0,~\quad\,\xi^{T}\nabla^{2}_{x'}h_{2}(0')\xi\leq-\kappa_{1}|\xi|^{2}<0.
\end{equation}
and
\begin{equation}\label{h1h2}
\|h_{1}\|_{C^{3,1}(B'_{2R_{0}})}+\|h_{2}\|_{C^{3,1}(B'_{2R_{0}})}\leq{C}.
\end{equation}
More generally, after a rotation of the coordinates if necessary, we assume that
\begin{equation}\label{h1h23}
(h_{1}-h_{2})(x')=\sum_{j=1}^{n-1}\frac{\lambda_{j}}{2}x_{j}^{2}+O(|x'|^{2+\alpha}),\quad\,|x'|\leq\,2R_{0},
\end{equation}
where $\mathrm{diag}(\lambda_{1},\cdots,\lambda_{n-1})=\nabla_{x'}^{2}(h_{1}-h_{2})(0')$. For $0\leq\,r\leq\,2R_{0}$, let
$$ \Omega_r:=\left\{(x',x_{n})\in \mathbb{R}^{n}~\big|~h_{2}(x')<x_{n}<\varepsilon+h_{1}(x'),~|x'|<r\right\}.$$

We introduce an auxiliary function $\bar{v}_{1}\in{C}^{2,\alpha}(\mathbb{R}^{n})$, such that $\bar{v}_{1}=1$ on $\partial{D}_{1}$, $\bar{v}_{1}=0$ on $\partial{D}_{2}\cup\partial{D}$,
\begin{align}\label{ubar}
\bar{v}_{1}(x)
=\frac{x_{n}-h_{2}(x')}{\varepsilon+(h_{1}-h_{2})(x')},\quad\mbox{in}~~\Omega_{2R_{0}},
\end{align}
and
\begin{equation}\label{nablau_bar_outside}
\|\bar{v}_{1}\|_{C^{2,\alpha}(\mathbb{R}^{n}\setminus \Omega_{R_{0}})}\leq\,C.
\end{equation}
In view of \eqref{h1h20}--\eqref{h1h2}, a direct calculation gives
\begin{equation}\label{nablau_bar}
\begin{split}
\left|\partial_{x_{j}}\bar{v}_{1}(x)\right|&\leq\frac{C|x'|}{\varepsilon+(h_{1}-h_{2})(x')},~~j=1,2,\cdots,n-1,\\
\quad
\partial_{x_{n}}\bar{v}_{1}(x)&=\frac{1}{\varepsilon+(h_{1}-h_{2})(x')},
\end{split}\qquad~x\in\Omega_{2R_{0}}.
\end{equation}
Here and throughout this paper, unless otherwise stated, $C$ denotes a constant, whose values may vary from line to line, depending only on $n,\kappa_{0},\kappa_{1}$, $\|\partial{\Omega}\|_{C^{2,\alpha}}$, $\|\partial{D}_{1}\|_{C^{2,\alpha}}$ and $\|\partial{D}_{2}\|_{C^{2,\alpha}}$, but not on $\varepsilon$. Also, we call a constant having such dependence a
{\it universal constant}.

Consider the following limit problem
\begin{equation}\label{equ_u0}
\begin{cases}
\Delta{u}_{0}=0&\mbox{in}~\Omega^{0}:=D\setminus\overline{D_{1}^{0}\cup{D}_{2}^{0}},\\
u_{0}=C_{0}&\mbox{on}~\overline{D_{1}^{0}\cup{D}_{2}^{0}},\\
\int_{\partial{D}_{1}^{0}}\frac{\partial{u}_{0}}{\partial\nu^{-}}+\int_{\partial{D}_{2}^{0}}\frac{\partial{u}_{0}}{\partial\nu^{-}}=0&\\
u_{0}=\varphi(x)&\mbox{on}~\partial{D}.
\end{cases}
\end{equation}
It will be shown later that $u_{0}$ is the limit of $u$. We use $u_{0}$ to define a linear functional of $\varphi$, which determines whether $\nabla{u}$ blows up or not,
\begin{equation}\label{Qphi0}
\mathcal{B}_{0}[\varphi]:=-\int_{\partial{D}_{1}^{0}}\frac{\partial{u}_{0}}{\partial\nu^{-}}=\int_{\partial{D}_{2}^{0}}\frac{\partial{u}_{0}}{\partial\nu^{-}}.
\end{equation}
This factor was first introduced by Gorb and Novikov in \cite{gn} for $p$-Laplace equation, denoted by $\mathcal{R}_{0}$ . It turns out there that $\mathcal{R}_{0}$ is the key characteristic parameter of the $W^{1,\infty}$ blow-up of $u$, see also \cite{g}.

In the following, we use $O(1)$ to denote some quantity satisfying $|O(1)|\leq\,C$, for some constant $C$ independent of $\varepsilon$.
We have the asymptotic expression of $\nabla{u}$ in the narrow region between $D_{1}$ and $D_{2}$ as follows:

\begin{theorem}\label{thm1}
For $n=2,3$, let $D$, $D_{1}$, $D_{2}$ be defined as the
above and satisfy \eqref{h1h200}-\eqref{h1h23}, $\varphi\in{C}^{2}(\partial{D})$. Assume that
$u\in{H}^{1}(D)\cap{C}^{1}(\overline{\Omega})$
is the solution to \eqref{equinfty2}. Then for $\varphi$ such that $\mathcal{B}_{0}[\varphi]\neq0$, we have

(i) for $n=2$,
\begin{equation}\label{equ_thm1}
\nabla{u}=\frac{\mathcal{B}_{0}[\varphi]\sqrt{\varepsilon}}{\kappa_{2}}\nabla\bar{v}_{1}+O(1)\|\varphi\|_{C^{2}(\partial{D})},\quad\,\mbox{in}~~\Omega_{R_{0}};
\end{equation}

(ii) for $n=3$,
\begin{equation}\label{equ2_thm1}
\nabla{u}=\frac{\mathcal{B}_{0}[\varphi]}{\kappa_{3}|\log\varepsilon|}\Big(1+O\big(|\log\varepsilon|^{-1}\big)\Big)
\nabla\bar{v}_{1}+O(1)\|\varphi\|_{C^{2}(\partial{D})},\quad\,\mbox{in}~~\Omega_{R_{0}},
\end{equation}
where
\begin{equation}\label{kappa}
\kappa_{n}:=
\begin{cases}
\frac{\sqrt{2}\pi}{\sqrt{\lambda_{1}}}&n=2,\\
\frac{\pi}{\sqrt{\lambda_{1}\lambda_{2}}}&n=3,
\end{cases}
\end{equation}
$\lambda_{1}$ (or $\lambda_{1}$ and $\lambda_{2}$) is the relatively principal curvature of $\partial{D}_{1}$ and $\partial{D}_{2}$, defined in \eqref{h1h23}.
\end{theorem}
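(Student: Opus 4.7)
The proof hinges on the decomposition $u = (C_1-C_2)v_1 + v_b$ from \eqref{decomposition_u}, which isolates the singular scalar $C_1-C_2$ from two gradient factors that can be estimated independently: $\nabla v_1$, whose singular profile matches that of the explicit $\nabla\bar v_1$, and $\nabla v_b$, which is uniformly bounded. The plan is to establish three facts and combine them via \eqref{decom_u2}: (a) $\nabla v_1 = \nabla\bar v_1 + O(1)$ pointwise in $\Omega_{R_0}$; (b) $\|\nabla v_b\|_{L^\infty(\Omega)} \leq C\|\varphi\|_{C^2(\partial D)}$; and (c) a sharp asymptotic $C_1 - C_2 = \mathcal{B}_0[\varphi]\rho_n(\varepsilon)/\kappa_n\cdot(1+o(1))$.

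For (a), set $w := v_1 - \bar v_1$. Since $\bar v_1$ matches the Dirichlet data of $v_1$ on $\partial D_1\cup\partial D_2$, $w$ vanishes there, and a direct computation from \eqref{ubar}--\eqref{nablau_bar} shows that $\Delta\bar v_1$ has controlled scaling in the gap width $\delta(x') := \varepsilon+(h_1-h_2)(x')$. A dyadic rescaling on tubes $\{|x'|\sim 2^{-k}\}\cap\Omega_{R_0}$ combined with standard interior and boundary energy estimates (in the spirit of \cite{bly1,lx}) then yields $\|\nabla w\|_{L^\infty(\Omega_{R_0/2})}\leq C$. For (b), $v_b$ has equal constant value $C_2$ on $\partial D_1$ and $\partial D_2$, so there is no singular potential jump across the narrow gap; the maximum principle gives $|C_2|\leq C\|\varphi\|_{L^\infty(\partial D)}$, and classical elliptic regularity produces the required $L^\infty$ gradient bound.

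For (c), the flux condition $\int_{\partial D_1}\partial_{\nu^-} u = 0$ together with the decomposition gives
\begin{equation*}
C_1 - C_2 \;=\; -\,\frac{\int_{\partial D_1}\partial_{\nu^-} v_b}{\int_{\partial D_1}\partial_{\nu^-} v_1}.
\end{equation*}
The denominator equals $\int_\Omega |\nabla v_1|^2$ by integration by parts; using (a) to substitute $v_1 = \bar v_1 + O(1)$, the leading behavior reduces to $\int_{|x'|<R_0}\frac{dx'}{\delta(x')}$, which by \eqref{h1h23} and an explicit change of variables scales as $\kappa_n\,\rho_n(\varepsilon)^{-1}$ with the precise constant displayed in \eqref{kappa}. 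For the numerator, observe that $C_1-C_2 \to 0$ forces $C_1,C_2 \to C_0$ and $v_b \to u_0$ (in $H^1$, say) as $\varepsilon\to0$; hence $\int_{\partial D_1}\partial_{\nu^-}v_b \to \int_{\partial D_1^0}\partial_{\nu^-}u_0 = -\mathcal{B}_0[\varphi]$. Combining with (a) and (b), the identity $\nabla u = (C_1-C_2)(\nabla\bar v_1 + O(1)) + \nabla v_b$ yields the claimed expansion once the $(C_1-C_2)\cdot O(1)$ and $\nabla v_b$ terms are absorbed into $O(1)\|\varphi\|_{C^2(\partial D)}$.

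The main obstacle lies in $n=3$: both asymptotics entering (c) are only logarithmic, so the relative error $1+O(|\log\varepsilon|^{-1})$ displayed in \eqref{equ2_thm1} forces one past the leading term in both the capacity and the convergence $v_b\to u_0$. Establishing a quantitative rate $|v_b-u_0|=O(|\log\varepsilon|^{-1})$ near $\partial D_1$ (via barrier/harmonic-measure comparisons along the lines of \cite{bly1,kly2}) and a careful analysis of the $O(|x'|^{2+\alpha})$ correction in \eqref{h1h23} when evaluating the $|\log\varepsilon|$-asymptotic of the capacity are the technical crux of the theorem.
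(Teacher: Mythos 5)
Your strategy coincides with the paper's: the decomposition $u=(C_{1}-C_{2})v_{1}+v_{b}$, the pointwise approximation $\nabla v_{1}=\nabla\bar v_{1}+O(1)$ (Proposition \ref{prop_v1}, proved by exactly the rescaling/energy--iteration you sketch), the identity $C_{1}-C_{2}=\mathcal{B}_{\varepsilon}[\varphi]\big/\int_{\Omega}|\nabla v_{1}|^{2}$, the capacity asymptotics $\int_{\Omega}|\nabla v_{1}|^{2}=\kappa_{n}\rho_{n}(\varepsilon)^{-1}+M+O(E_{n}(\varepsilon))$ (quoted from \cite{lly} as Theorem A), and the convergence of $\mathcal{B}_{\varepsilon}[\varphi]$ to $\mathcal{B}_{0}[\varphi]$ (Proposition \ref{prop2}). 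Two steps, however, are insufficient as written. First, for $\nabla v_{b}$: ``classical elliptic regularity'' does not by itself give a gradient bound uniform in $\varepsilon$ inside the degenerating channel $\Omega_{R_{0}}$; one needs the narrow-region estimate exploiting that $v_{b}\equiv C_{2}$ on both $\partial D_{1}$ and $\partial D_{2}$ (Theorem \ref{thm2}, proved by the same Caccioppoli iteration as in your part (a), which in fact yields exponential smallness of $|\nabla v_{b}|$ in the gap). You identify the correct structural reason, so this is a matter of supplying the right lemma rather than a wrong idea.

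The genuine gap is in the numerator of $C_{1}-C_{2}$: your qualitative convergence $\int_{\partial D_{1}}\partial_{\nu^{-}}v_{b}\to-\mathcal{B}_{0}[\varphi]$ is not enough even for $n=2$. It only gives $C_{1}-C_{2}=\frac{(\mathcal{B}_{0}[\varphi]+o(1))\sqrt{\varepsilon}}{\kappa_{2}}\left(1+O(\sqrt{\varepsilon})\right)$, and since $|\partial_{x_{n}}\bar v_{1}|$ is of size $\varepsilon^{-1}$ near the origin, the resulting error in $\nabla u$ is $o(\varepsilon^{-1/2})$ rather than the claimed $O(1)$. A rate $\mathcal{B}_{\varepsilon}[\varphi]-\mathcal{B}_{0}[\varphi]=O(\sqrt{\varepsilon})$ is needed in dimension two as well, not only the logarithmic rate you flag for $n=3$; the paper proves $O(\rho_{n}^{2}(\varepsilon))$ in Proposition \ref{prop2}. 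Moreover, comparing $\int_{\partial D_{1}}\partial_{\nu^{-}}v_{b}$ with $\int_{\partial D_{1}^{0}}\partial_{\nu^{-}}u_{0}$ is delicate because the two fluxes live on different surfaces, and $H^{1}$-convergence of $v_{b}$ to $u_{0}$ does not control normal derivatives on the boundary. The paper's device is to set $\phi=C_{2}-C_{0}-(v_{b}-u_{0})$, bound it by the maximum principle on $D\setminus\overline{D_{1}\cup D_{1}^{0}\cup D_{2}}$ (which itself requires the quantitative estimate $|C_{i}-C_{0}|\leq C\rho_{n}(\varepsilon)$ of Lemma \ref{lem6.2}), and then convert both fluxes into integrals over $(\partial D)^{+}$ and the fixed mid-section $\gamma=\{x_{n}=0\}\cap\Omega$ via the divergence theorem, estimating $\partial_{x_{n}}\phi$ on $\gamma$ by interior gradient estimates away from the origin and by the exponential decay of Theorem \ref{thm2} near it. Your proposed barrier/harmonic-measure comparison is the right tool; it must simply be carried out quantitatively in both dimensions.
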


\begin{remark}
We would like to point out that from \eqref{nablau_bar} $\nabla\bar{v}_{1}$ is explicit. So the singularity of $\nabla u$ in the narrow region $\Omega_{R}$ can be calculated provided $\mathcal{B}_{0}[\varphi]$ is known for a given $\varphi$. The computation of $\mathcal{B}_{0}[\varphi]$ is an interesting numerical problem, because there is no singularity in $\nabla u_{0}$. We leave it to the interested readers.
\end{remark}

\begin{remark}
This blow-up factor $\mathcal{B}_{0}[\varphi]$ is more natural than $Q_{\epsilon}[\varphi]$ defined in \cite{bly1}, and it is much easier to check whether or not it equals zero, since $\nabla{u}_{0}$ is regular, namely, always bounded. While in the definition of $Q_{\epsilon}[\varphi]$, the singular terms $\nabla{v}_{1}$ and $\nabla{v}_{2}$ are used. In fact, there may exist a boundary data $\varphi$ such that $\mathcal{B}_{0}[\varphi]=0$, but it is easy to find another $\varphi$ such that $\mathcal{B}_{0}[\varphi]\neq0$ by a perturbation argument. 
\end{remark}

\begin{remark}
We would like to point out that from \eqref{nablau_bar} our asymptotic formula \eqref{equ_thm1} and \eqref{equ2_thm1} are actually pointwise expressions near the origin. This is different with the results in \cite{gn,g}, where the norm $\|\nabla{u}\|_{L^{\infty}(\Omega_{\delta})}$ is considered.
\end{remark}

From \eqref{kappa}, one can see the constant $\kappa_{n}$ depends on the curvature of $\partial{D}_{1}^{0}$ and $\partial{D}_{2}^{0}$ at the origin. For example, if the mean curvature $\lambda_{1}\lambda_{2}\rightarrow\,0$, then the quatity $\frac{1}{\kappa_{3}}$ in \eqref{equ2_thm1} tends to zero as well. While, when $\partial{D}_{1}^{0}$ and $\partial{D}_{2}^{0}$ are relatively convex of order $m>2$, especially when there exist a constant $\lambda>0$ such that
\begin{equation}\label{h1h24}
(h_{1}-h_{2})(x')=\lambda|x'|^{m},\quad\,m>2,~\mbox{for}~|x'|<R_{0},
\end{equation}
that is, their relative curvature vanishes. This will cause the blow-up rate to change. In order to reveal the relation between the convexity and the blow-up rate for particles with zero curvature at the point of the closest distance, we here restrict our consideration only to this symmetric case \eqref{h1h24}. For more generalized $m$-convex inclusions cases, the same assertions should also be true. For simplicity, we also assume that
\begin{equation}\label{h1h25}
|\nabla_{x'}h_{1}|,|\nabla_{x'}h_{2}|\leq\,C|x'|^{m-1},~\mbox{for}~|x'|<R_{0}.
\end{equation}

We define
$$\rho_{n}^{m}(\varepsilon)=
\begin{cases}
\varepsilon^{1-\frac{n-1}{m}}&\mbox{for}~m>n-1,~n\geq3\\
|\log\varepsilon|^{-1}&\mbox{for}~m=n-1,~n\geq3\\
\varepsilon^{1-\frac{1}{m}}&\mbox{for}~m\geq2,~n=2.
\end{cases}
$$

\begin{theorem}\label{thmm}
Let $D$, $D_{1}$, $D_{2}$ be of $C^{2,\alpha}$ and satisfy \eqref{h1h24} and \eqref{h1h25} with $m\geq2$ if $n=2$, $m\geq\,n-1$ if $n\geq3$, $\varphi\in{C}^{2}(\partial{D})$. Assume that
$u\in{H}^{1}(D)\cap{C}^{1}(\overline{\Omega})$
is the solution to \eqref{equinfty2}. Then for $\varphi$ such that $\mathcal{B}_{0}[\varphi]\neq0$, we have

(i) if $m\geq\,2(n-1)$, $n\geq2$, 
\begin{equation}\label{equ_thmm}
\nabla{u}=\frac{\mathcal{B}_{0}[\varphi]\rho_{n}^{m}(\varepsilon)}{\mathcal{L}\,\lambda^{\frac{n-1}{m}}}
\nabla\bar{v}_{1}+O(1)\|\varphi\|_{C^{2}(\partial{D})},\quad\,\mbox{in}~~\Omega_{R_{0}};
\end{equation}

(ii) if $n-1\leq\,m<2(n-1)$ and $n\geq3$,
\begin{equation}\label{equ2_thmm}
\nabla{u}=\mathcal{B}_{0}[\varphi]\frac{\rho_{n}^{m}(\varepsilon)}{\mathcal{L}\,\lambda^{\frac{n-1}{m}}}\Big(1+O\big(\rho_{n}^{m}(\varepsilon)\big)\Big)
\nabla\bar{v}_{1}+O(1)\|\varphi\|_{C^{2}(\partial{D})},\quad\,\mbox{in}~~\Omega_{R_{0}},
\end{equation}
where $\mathcal{L}$ is a constant depending only on $m$ and $n$.
\end{theorem}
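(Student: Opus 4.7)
\noindent\emph{Proof plan.} The argument parallels Theorem \ref{thm1} and rests on the decomposition \eqref{decomposition_u}--\eqref{decom_u2}. There are three ingredients: (a) pointwise gradient estimates showing $\nabla v_1=\nabla\bar v_1+O(1)$ in $\Omega_{R_0}$ and $\|\nabla v_b\|_{L^\infty(\Omega)}\le C\|\varphi\|_{C^{2}(\partial D)}$; (b) an asymptotic evaluation of the capacity-type integral $\int_\Omega|\nabla v_1|^2$ in the $m$-convex regime; and (c) the identification of $C_1-C_2$ via the orthogonality condition combined with the limit problem \eqref{equ_u0}.

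For (a), the bound on $\nabla v_b$ holds because $v_b$ takes the same constant value on $\partial D_1$ and $\partial D_2$ and is therefore insensitive to the narrow-gap singularity; it follows from standard elliptic regularity together with a cut-off argument in $\Omega_{R_0}$. The estimate $|\nabla(v_1-\bar v_1)|\le C$ in $\Omega_{R_0}$ is obtained by applying a Caccioppoli-type iteration to the function $v_1-\bar v_1$, which vanishes on $\partial D_1\cup\partial D_2$, and using the pointwise bound $|\Delta\bar v_1|\le C|x'|^{m-2}/(\varepsilon+\lambda|x'|^m)$ that follows from differentiating \eqref{ubar} under \eqref{h1h24}--\eqref{h1h25}.

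For (b), Green's identity yields $\int_{\partial D_1}\partial_{\nu^-}v_1\,dS=\int_\Omega|\nabla v_1|^2\,dx$. Replacing $\nabla v_1$ by $\nabla\bar v_1$ via (a), using \eqref{nablau_bar} and \eqref{h1h24}, and performing the $x_n$-integration reduces the dominant contribution to
\begin{equation*}
\int_{|x'|<R_0}\frac{dx'}{\varepsilon+\lambda|x'|^m},
\end{equation*}
which, via polar coordinates in $\mathbb{R}^{n-1}$ and the rescaling $s=(\lambda/\varepsilon)^{1/m}|x'|$, becomes a dimensionless integral $\int_{0}^{L}s^{n-2}/(1+s^m)\,ds$ that converges to a finite limit when $m>n-1$ and diverges logarithmically when $m=n-1$. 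This produces the leading asymptotic $\int_\Omega|\nabla v_1|^2=\lambda^{-(n-1)/m}/(\mathcal{L}\,\rho_n^m(\varepsilon))$ with $\mathcal{L}=\mathcal{L}(m,n)$ coming from that explicit integral together with the surface measure of $S^{n-2}$, and with remainder $o(1)$ relative to the leading term in case (i) and of order $\rho_n^m(\varepsilon)$ in case (ii); the threshold $m=2(n-1)$ is where the $O(1)$ error from (a), integrated against $\nabla\bar v_1$, becomes comparable to a subleading contribution of the above integral. Step (c) then combines the orthogonality $\int_{\partial D_1}\partial_{\nu^-}u=0$ with \eqref{decomposition_u} to obtain $(C_1-C_2)\int_\Omega|\nabla v_1|^2=-\int_{\partial D_1}\partial_{\nu^-}v_b$; convergence $v_b\to u_0$ as $\varepsilon\to 0$ (from uniform bounds away from the origin, compactness, and $C_2\to C_0$ by uniqueness of \eqref{equ_u0}) together with \eqref{Qphi0} yield $-\int_{\partial D_1}\partial_{\nu^-}v_b\to\mathcal{B}_0[\varphi]$. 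Substituting back into $\nabla u=(C_1-C_2)\nabla v_1+\nabla v_b$ and invoking (a) once more produces \eqref{equ_thmm} and \eqref{equ2_thmm}.

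\noindent\emph{Main obstacle.} The hardest part is step (a) in the $m$-convex regime. The natural scaling of the narrow region is anisotropic: at a base point $x_0$ with $|x_0'|$ of moderate size, the appropriate cylinder on which to bootstrap has horizontal side comparable to $(\varepsilon+\lambda|x_0'|^m)^{1/m}$ rather than the $2$-convex scaling $(\varepsilon+\lambda|x_0'|^2)^{1/2}$, and the degenerate weight in $\Delta\bar v_1$ must be carefully absorbed into the iteration. A secondary subtlety is the borderline range $n-1\le m<2(n-1)$, where the $O(1)$ error from (a) is comparable to a subleading term in the capacity expansion and therefore must be tracked precisely to produce the $(1+O(\rho_n^m(\varepsilon)))$ factor in \eqref{equ2_thmm}.
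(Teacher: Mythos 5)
Your architecture coincides with the paper's: the decomposition $\nabla u=(C_1-C_2)\nabla v_1+\nabla v_b$ from \eqref{decom_u2}, the estimates $\nabla v_1=\nabla\bar v_1+O(1)$ and $\|\nabla v_b\|_{L^\infty}\le C$ (Proposition \ref{prop_v1}, Theorem \ref{thm2}), the capacity asymptotics via the explicit integral $\int_{|x'|<R_0}(\varepsilon+\lambda|x'|^m)^{-1}dx'$ (Theorem \ref{thm_energy2}), and the identification $C_1-C_2=-\int_{\partial D_1}\partial_{\nu^-}v_b\big/\int_\Omega|\nabla v_1|^2$. Your steps (a) and (b) are in substance what the paper proves, and your observation that only the leading term of the capacity plus an $O(1)$ correction is needed for the theorem is correct (though watch the placement of $\lambda^{(n-1)/m}$: the explicit integral produces $\lambda^{-(n-1)/m}$ times a constant, and your formula inverts the role of $\mathcal{L}$ relative to the statement).

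The genuine gap is in step (c). You pass from $\mathcal{B}_\varepsilon[\varphi]=-\int_{\partial D_1}\partial_{\nu^-}v_b$ to $\mathcal{B}_0[\varphi]$ by qualitative convergence $v_b\to u_0$ obtained "from uniform bounds, compactness, and uniqueness". That yields only $\mathcal{B}_\varepsilon[\varphi]-\mathcal{B}_0[\varphi]=o(1)$, which does not close the proof: since $|\nabla\bar v_1|\sim\varepsilon^{-1}$ near the origin, the substitution error in $\nabla u$ is $o(1)\cdot\rho_n^m(\varepsilon)\cdot\varepsilon^{-1}=o(\varepsilon^{-(n-1)/m})$, which is unbounded, so neither \eqref{equ_thmm} nor \eqref{equ2_thmm} follows. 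What is required is the quantitative rate $\mathcal{B}_\varepsilon[\varphi]-\mathcal{B}_0[\varphi]=O(\rho_n^m(\varepsilon))$, which is the paper's Proposition \ref{prop2} and occupies most of Section \ref{Sec_prop2}: it needs $|C_i-C_0|\le C\rho_n^m(\varepsilon)$ (Lemma \ref{lem6.2}, obtained from a symmetrized form of the $2\times2$ linear system for $C_1,C_2$), the maximum principle for $C_2-C_0-(v_b-u_0)$ on $D\setminus\overline{D_1\cup D_1^0\cup D_2}$, and an estimate of the flux difference through the mid-plane $\gamma=\{x_n=0\}\cap\Omega$ split into a near zone (where the exponential decay of Theorem \ref{thm2} is used), an intermediate zone (interior gradient estimates at scale $|y'|^m$), and a far zone. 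None of this appears in your proposal. This rate is also exactly what produces the dichotomy: the relative error $O(\rho_n^m)$ in $C_1-C_2$ (coming both from $\mathcal{B}_\varepsilon-\mathcal{B}_0$ and from the constant $M$ in the capacity expansion), multiplied by the leading size $\rho_n^m$ and by $|\nabla\bar v_1|\le C\varepsilon^{-1}$, is $O(\varepsilon^{1-2(n-1)/m})$, bounded precisely when $m\ge 2(n-1)$; your attribution of the threshold to "the $O(1)$ error from (a) integrated against $\nabla\bar v_1$" is not the correct mechanism.
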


\begin{remark}
In some sense Theorem \ref{thmm} could be regard as an extension of an 2D asymptotic formula (21) in \cite{kleey}, 
\begin{equation}\label{kleey1}
\nabla u=\alpha_{0}\nabla q_{\varepsilon}+O(1),
\end{equation}
where $q_{\varepsilon}$ is a singular function in dimension two, with $\nabla q_{\varepsilon}\sim\frac{1}{\sqrt{\varepsilon}}$. The conclusions in Theorem \ref{thmm} hold in dimensions two and three. Moreover, they show that the blow-up rate of $|\nabla{u}|$ at the origin is $\frac{\rho_{n}^{m}(\varepsilon)}{\varepsilon}$, which depends on the space dimension and the order of the convexity of the inclusions. Especially, in $\mathbb{R}^{n}$, when the convexity of inclusions is different, the blow-up rate is different. In this sense, when we use a ball (with $2$-convexity) to approximate an arbitrary convex inclusion, the error in general will be large, unless its convexity is also of order $2$. 
\end{remark}

\subsection{The outline of the proof of Theorems \ref{thm1} and \ref{thmm}}%\label{sec_thm1}

In this section we list the strategy and main ingredients of the proof of Theorem \ref{thm1} and \ref{thmm}.  Without loss of generality, we assume that $\|\varphi\|_{C^{2}(\partial{D})}=1$, by considering $u/\|\varphi\|_{C^{2}(\partial{D})}$ if $\|\varphi\|_{C^{2}(\partial{D})}>0$. If $\varphi~\big|_{\partial{D}}=0$ then $u\equiv0$.

Using the trace embedding theorem and $\|u\|_{H^{1}(\Omega)}\leq\,C$ (independent of $\varepsilon$),  we have
\begin{equation}\label{C1C2}
|C_{1}|+|C_{2}|\leq\,C.
\end{equation}
In view of \eqref{decomposition_u} and the third line of \eqref{equinfty2}, the constants $C_{1}-C_{2}$ is determined by the following linear system
\begin{equation}\label{sysC1C2*}
(C_{1}-C_{2})\int_{\partial{D}_{i}}\frac{\partial{v}_{1}}{\partial\nu^{-}}+\int_{\partial{D}_{i}}\frac{\partial{v}_{b}}{\partial\nu^{-}}=0,\quad i=1,2.
\end{equation}
If $\int_{\partial{D}_{1}}\frac{\partial v_{1}}{\partial\nu^{-}}\neq 0$, then from \eqref{sysC1C2*},  
\begin{equation}\label{C1-C2}
C_{1}-C_{2}=\dfrac{-\int_{\partial{D}_{1}}\frac{\partial v_{b}}{\partial\nu^{-}}}{\int_{\partial{D}_{1}}\frac{\partial v_{1}}{\partial\nu^{-}}}.
\end{equation}

In the following we estimate the two terms $\int_{\partial{D}_{1}}\frac{\partial{v}_{1}}{\partial\nu}$ and $\int_{\partial{D}_{1}}\frac{\partial{v}_{b}}{\partial\nu^{-}}$, respectively. First,
by the definition of $v_{1}$ and integration by parts, we have
$$\int_{\partial{D}_{1}}\frac{\partial v_{1}}{\partial\nu^{-}}=\int_{\partial\Omega}v_{1}\frac{\partial v_{1}}{\partial\nu}=\int_{\Omega}|\nabla{v}_{1}|^{2}.$$

\begin{thmA}(\cite{lly})\label{thm_energy}
For $n=2,3$, assume $D_{1}$, $D_{2}$ are of $C^{k,1}$, $k\geq3$ and satisfy \eqref{h1h23}. Then there exists a constant $M$, depending only on $D_{1}^{0},D_{2}^{0}$ and $\Omega$, such that
\begin{equation}\label{asym1}
\int_{\Omega}|\nabla{v}_{1}|^{2}
-\Big(\frac{\kappa_{n}}{\rho_{n}(\varepsilon)}+M\Big)=O\Big(E_{n}(\varepsilon)\Big),
\end{equation}
where
$$
E_{n}(\varepsilon)=\begin{cases}
\varepsilon^{\frac{1}{4}-\frac{1}{2k}}&n=2,\\
\varepsilon^{\frac{k-1}{2k}}|\log\varepsilon|&n=3.
\end{cases}
$$
\end{thmA}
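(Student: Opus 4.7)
The plan is to test $v_1$ against its explicit affine-in-$x_n$ approximant $\bar v_1$ from \eqref{ubar}, so that the leading singular contribution to the energy is computed by hand while the remainder is controlled by elliptic estimates. Since $v_1$ is harmonic in $\Omega$ and $v_1-\bar v_1$ vanishes on $\partial\Omega$, integration by parts gives the basic orthogonality
\begin{equation*}
\int_\Omega\nabla v_1\cdot\nabla(v_1-\bar v_1)=0,\quad\text{hence}\quad\int_\Omega|\nabla v_1|^2=\int_\Omega|\nabla\bar v_1|^2-\int_\Omega|\nabla w|^2,
\end{equation*}
where $w:=v_1-\bar v_1$. Thus it suffices to (a) evaluate $\int_\Omega|\nabla\bar v_1|^2$ explicitly to the required precision, and (b) show that $\int_\Omega|\nabla w|^2$ converges to a constant $M_\ast$, independent of $\varepsilon$, at rate $E_n(\varepsilon)$. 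The constant $M$ in the statement will then be the sum of the bounded contributions from (a) minus $M_\ast$.

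For step (a), split $\Omega=\Omega_{R_0}\cup(\Omega\setminus\Omega_{R_0})$. The outer integral is $\varepsilon$-independent up to $O(\varepsilon)$ by \eqref{nablau_bar_outside}, the only $\varepsilon$-dependence being the $O(\varepsilon)$ translation of $D_1^\varepsilon$ relative to $D_1^0$. On $\Omega_{R_0}$, the tangential-derivative contribution is controlled by $\int|x'|^2/\delta(x')\,dx'=O(1)$, so by \eqref{nablau_bar},
\begin{equation*}
\int_{\Omega_{R_0}}|\nabla\bar v_1|^2\,dx=\int_{B'_{R_0}}\frac{dx'}{\delta(x')}+O(1),\qquad\delta(x'):=\varepsilon+(h_1-h_2)(x').
\end{equation*}
The singular integral is evaluated via the substitution $x_j=\sqrt{2\varepsilon/\lambda_j}\,y_j$ using the $C^{k,1}$ Taylor expansion $(h_1-h_2)(x')=\tfrac12\sum_j\lambda_j x_j^2+O(|x'|^{k+1})$: in $n=2$ this reduces to an elementary arctan, and in $n=3$ to a polar-coordinate logarithm. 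The limits produce the leading term $\kappa_n/\rho_n(\varepsilon)$; the finite cutoff at $|x'|=R_0$ contributes an $\varepsilon$-independent constant; and the $(k+1)$-th order Taylor tail, together with the $O(\varepsilon)$ outer term, generates an error of order $E_n(\varepsilon)$.

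Step (b) is the core. Since $\bar v_1$ is affine in $x_n$, $\partial_{x_n}^2\bar v_1\equiv 0$, so $-\Delta w=\Delta_{x'}\bar v_1$ in $\Omega$ with $w=0$ on $\partial\Omega$. Direct differentiation of \eqref{ubar} under the $C^{k,1}$-regularity of $h_1,h_2$ gives $|\Delta\bar v_1(x)|\le C/\delta(x')$ in $\Omega_{R_0}$, with improved decay at the contact point for higher $k$. A Caccioppoli/energy argument together with the pointwise gradient bounds of Bao--Li--Yin type yields $\|\nabla w\|_{L^\infty(\Omega)}\le C$ uniformly in $\varepsilon$. To upgrade this bound to a quantitative asymptotic one carries out a two-scale comparison: away from the origin, $w$ approaches the corresponding remainder for the touching ($\varepsilon=0$) configuration by a perturbation argument on a smooth domain; near the origin, a blow-up/rescaling by $\sqrt\varepsilon$ identifies $w$ with the solution of a model problem having frozen principal curvatures, whose Dirichlet energy can be made explicit. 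Matching the two regimes produces the constant $M_\ast$, while the $C^{k,1}$ Taylor remainder (with $k\ge 3$) quantifies the gap as $E_n(\varepsilon)$.

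The main obstacle is precisely this quantitative convergence in step (b): a uniform bound $|\nabla w|\le C$ alone yields only $\int_\Omega|\nabla w|^2\le C$, not convergence to an explicit constant at the claimed rate. The two-scale matching near and far from the origin, together with the full use of the higher regularity $C^{k,1}$ through Taylor expansions of $h_1-h_2$ to the needed order, is what produces the dimension-dependent error $E_n(\varepsilon)$ with the correct exponents $\varepsilon^{1/4-1/(2k)}$ in $n=2$ and $\varepsilon^{(k-1)/(2k)}|\log\varepsilon|$ in $n=3$.
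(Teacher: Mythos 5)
First, note that the paper does not prove Theorem A at all: it is quoted from \cite{lly}, and the only in-house analogue is Theorem \ref{thm_energy2}, proved in Section \ref{sec_asym} by splitting the energy at the radius $\varepsilon^{\gamma}$ and comparing $v_{1}$ with the touching-configuration solution $v_{1}^{0}$ outside the neck (Lemmas \ref{lem_29} and \ref{lem_49}) and with $\bar{v}_{1}$ inside it. Your starting identity is correct and genuinely different from that route: since $v_{1}$ is harmonic and $w=v_{1}-\bar{v}_{1}\in H^{1}_{0}(\Omega)$, indeed $\int_{\Omega}\nabla v_{1}\cdot\nabla w=0$ and hence $\int_{\Omega}|\nabla v_{1}|^{2}=\int_{\Omega}|\nabla\bar{v}_{1}|^{2}-\int_{\Omega}|\nabla w|^{2}$; this is a clean reduction, and step (a) (the explicit evaluation of $\int|\nabla\bar v_1|^2$ via \eqref{nablau_bar} and the Taylor expansion \eqref{h1h23}) is routine.

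The genuine gap is step (b), which you yourself identify as ``the core'' but do not carry out — and step (b) \emph{is} the theorem. The statement's entire content is that the non-leading part equals an $\varepsilon$-independent constant $M$ up to the precise rate $E_{n}(\varepsilon)$; a uniform bound $\|\nabla w\|_{L^{\infty}}\le C$ (Proposition \ref{prop_v1}) gives only $\int_{\Omega}|\nabla w|^{2}=O(1)$, and your sketch never derives the exponents $\varepsilon^{1/4-1/(2k)}$ and $\varepsilon^{(k-1)/(2k)}|\log\varepsilon|$, nor shows that the various ``$+O(1)$'' terms you produce in step (a) (e.g.\ the tangential-gradient contribution $\int|x'|^{2}/\delta(x')\,dx'$ and the outer integral) individually converge to constants at that rate. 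Moreover, the specific device you propose near the origin — rescaling by $\sqrt{\varepsilon}$ to a model problem with frozen curvatures and ``explicit Dirichlet energy'' — is not the workable mechanism: the rescaled neck is an unbounded strip-like region and there is no explicit model energy to match. What actually closes the argument (both in \cite{lly} and in the paper's proof of Theorem \ref{thm_energy2}) is the observation that $|\nabla w|$ and $|\nabla w^{0}|$ are uniformly bounded, so the neck $\Omega_{\varepsilon^{\gamma}}$ contributes only its volume $O(\varepsilon^{c\gamma})$ to $\int|\nabla w|^{2}$, combined with a quantitative maximum-principle comparison $\|v_{1}-v_{1}^{0}\|_{L^{\infty}}\le C\varepsilon^{1/2}$ outside the neck and an interpolation with second-derivative bounds to upgrade it to gradients; optimizing over $\gamma$ and the $C^{k,1}$ Taylor remainder is what produces the $k$-dependent exponents. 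Without executing this comparison, the existence of $M$ and the rate $E_{n}(\varepsilon)$ remain unproven.
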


For $m$-convexity inclusions with zero-curvature, in order to extend Theorem A to all dimensions, we need the following proposition, which shows that $\nabla\bar{v}_{1}$ is the main singular part of $\nabla{v}_{1}$ in $\Omega_{R_{0}}$.

\begin{prop}\label{prop_v1}
For $n\geq2$, assume $D_{1}$, $D_{2}$ are of $C^{2,\alpha}$ and satisfy \begin{equation}\label{h1h2m}
\frac{1}{C}|x'|^{m}\leq(h_{1}-h_{2})(x')\leq\,C|x'|^{m},\quad\,m\geq\max\{2,n-1\},
\end{equation}
and \eqref{h1h25}. Let $v_{1}\in{H}^1(D)$ be the
weak solution of \eqref{equv1}. Then
\begin{equation}\label{nabla_w_in}
\|\nabla(v_{1}-\bar{v}_{1})\|_{L^{\infty}(\Omega)}\leq\,C.
\end{equation}
\end{prop}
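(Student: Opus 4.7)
My plan is to analyze the difference $w := v_{1} - \bar{v}_{1} \in H^{1}_{0}(\Omega)$, which satisfies $-\Delta w = \Delta\bar{v}_{1}$ in $\Omega$ with $w=0$ on $\partial\Omega$. Since $0 \le v_{1} \le 1$ by the maximum principle and $\bar{v}_{1}$ is bounded, $\|w\|_{L^{\infty}(\Omega)} \le C$ trivially. By \eqref{nablau_bar_outside}, $\bar{v}_{1}$ is uniformly $C^{2,\alpha}$ outside $\Omega_{R_{0}}$, so classical interior and boundary Schauder theory applied to the equation for $w$ gives $\|\nabla w\|_{L^{\infty}(\Omega \setminus \Omega_{R_{0}})} \le C$. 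In particular $|w|$ and $|\nabla w|$ are bounded on the lateral boundary $\{|x'|=R_{0}\}\cap\Omega$, and $w$ vanishes at the corners where that boundary meets $\partial D_{1}\cup\partial D_{2}$. The remaining task is to estimate $|\nabla w|$ at an arbitrary $z_{0} \in \Omega_{R_{0}}$.

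A direct differentiation of \eqref{ubar}, using \eqref{h1h2m}, \eqref{h1h25}, and the $C^{3,1}$ bound on the $h_{i}$, yields $\partial_{x_{n}}^{2}\bar{v}_{1}\equiv 0$ and the pointwise estimate $|\Delta\bar{v}_{1}(x)|\le C/\delta(x')$ in $\Omega_{2R_{0}}$, where $\delta(x'):=\varepsilon+(h_{1}-h_{2})(x')$. The crucial pointwise bound I would establish is
\[
|w(x)|\le C\,\delta(x'),\qquad x\in \Omega_{R_{0}},
\]
which I would prove via the explicit barrier
\[
\psi(x) := \frac{(x_{n}-h_{2}(x'))\,(\varepsilon+h_{1}(x')-x_{n})}{\delta(x')}.
\]
This $\psi$ vanishes on the top and bottom of the narrow channel, satisfies $0\le\psi\le\delta/4$ and $\psi\ge\tfrac{1}{2}\min\{x_{n}-h_{2},\,\varepsilon+h_{1}-x_{n}\}$, and by direct computation has $\partial_{x_{n}}^{2}\psi=-2/\delta$ with horizontal contributions uniformly of size $O(1)$ rather than the stronger $O(1/\delta)$ that would be needed to compete with the principal term (the factors of $a,b\le\delta$ in those contributions kill the $1/\delta$ singularity of $\partial_{jj}h_{i}$, while terms involving $\nabla h_{i}$ are controlled by $|x'|^{2m-2}/\delta\le C$ using $\delta\ge c|x'|^{m}$). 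After shrinking $R_{0}$ to absorb this $O(1)$ remainder, $\Delta\psi\le -1/\delta$ on $\Omega_{R_{0}}$.

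Combining with $|\Delta w|\le C/\delta$, the functions $M\psi\pm w$ are superharmonic in $\Omega_{R_{0}}$ for $M$ large; they vanish on $\partial D_{1}\cup\partial D_{2}$; and on the lateral boundary the Schauder bound $|\nabla w|\le C$ from the outer region combined with $w=0$ at the corners gives $|w(x)|\le C\min\{x_{n}-h_{2},\,\varepsilon+h_{1}-x_{n}\}\le 2C\psi(x)$. Taking $M\ge 2C$ (and large enough in the middle of the lateral boundary where $\psi\gtrsim R_{0}^{m}$), the maximum principle delivers $|w|\le M\psi\le C\delta$. Then for the gradient bound at $z_{0}\in\Omega_{R_{0}}$, set $r:=|z_{0}'|$, $\delta_{0}:=\delta(|z_{0}'|)$, and consider the rescaling $\tilde{w}(y):=\delta_{0}^{-1}w(z_{0}+\delta_{0}y)$ on the unit-size cylinder $\tilde{Q}$ obtained from $\{|x'-z_{0}'|<\delta_{0}/4\}\cap\Omega$. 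This scaling preserves gradients, $|\nabla_{x}w(z_{0})|=|\nabla_{y}\tilde{w}(0)|$; the estimate $|w|\le C\delta$ converts to $\|\tilde{w}\|_{L^{\infty}(\tilde{Q})}\le C$; the rescaled Poisson equation $\Delta_{y}\tilde{w}=-\delta_{0}\Delta\bar{v}_{1}(z_{0}+\delta_{0}y)$ has right-hand side bounded by $C$; and the rescaled top and bottom where $\tilde{w}=0$ have slope $O(r^{m-1})\le C$ with $C^{2,\alpha}$ regularity inherited from the $h_{i}$. Standard interior and boundary $C^{1,\alpha}$ Schauder estimates on $\tilde{Q}$ give $\|\nabla\tilde{w}\|_{L^{\infty}(\tilde{Q}/2)}\le C$, which is the desired $|\nabla w(z_{0})|\le C$.

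The main obstacle I expect is the lateral-boundary comparison $|w|\le 2C\psi$ on $\{|x'|=R_{0}\}\cap\Omega$. Because $\psi$ vanishes linearly at the corners where this boundary meets $\partial D_{1}\cup\partial D_{2}$, the match against $|w|$ there requires the matching linear decay of $|w|$ toward top and bottom, which must be extracted from the vanishing of $w$ on $\partial D_{1}\cup\partial D_{2}$ together with the outer Schauder bound on $|\nabla w|$, all with constants tracked uniformly in $\varepsilon$. Once the barrier comparison is in place, the rescaling step proceeds by standard tools.
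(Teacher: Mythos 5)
Your argument is correct, and it replaces the paper's key intermediate step by a genuinely different one. The paper also writes $w=v_{1}-\bar v_{1}$, gets the outer bound from \eqref{nablau_bar_outside} and standard elliptic theory, and finishes with the same rescaling of a $\delta(z')$-sized cylinder to unit scale followed by local $W^{2,p}$/Sobolev (equivalently your $C^{1,\alpha}$) estimates with right-hand side $\delta_{0}\Delta\bar v_{1}=O(1)$. The difference is the input fed into that rescaled estimate: the paper first proves the global energy bound $\int_{\Omega}|\nabla w|^{2}\le C$ and then runs a Caccioppoli-plus-iteration scheme (borrowed from the Lam\'e paper \cite{bll}) over cylinders $\Omega_{t_i}(z')$ to obtain the local energy bound $\int_{\Omega_{\delta(z')}(z')}|\nabla w|^{2}\le C\delta(z')^{n}$, whereas you prove the pointwise bound $|w|\le C\delta(x')$ by comparison with the barrier $\psi=ab/\delta$, whose Laplacian computation you have carried out correctly (the horizontal contributions are indeed $O(1)$ because the factors $a,b\le\delta$ and the bound $|\nabla h_{i}|\le C|x'|^{m-1}$ with $\delta\ge|x'|^{m}/C$ and $m\ge2$ kill the $1/\delta$), and whose lateral-boundary matching $|w|\le C\min\{a,b\}\le 2C\psi$ you correctly extract from $w=0$ on $\partial D_{1}\cup\partial D_{2}$ and the outer gradient bound. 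Your intermediate estimate is in fact stronger (it implies the paper's local energy bound via Caccioppoli) and the argument is arguably more elementary, avoiding the iteration; the trade-off is that it leans on the scalar maximum principle, while the paper's energy iteration is the version that survives for systems such as Lam\'e, which is presumably why the author uses it. Both routes are complete, so your proposal stands as a valid alternative proof.
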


\begin{theorem}\label{thm_energy2}
For $n\geq2$, assume $D_{1}$, $D_{2}$ are of $C^{2,\alpha}$, and satisfy \eqref{h1h2m} and \eqref{h1h24}-\eqref{h1h25}. Then there exists a constant $M$, depending only on $D_{1}^{0},D_{2}^{0}$ and $\Omega$, such that
\begin{equation}\label{asym2}
\int_{\Omega}|\nabla{v}_{1}|^{2}
=\frac{\mathcal{L}\,\lambda^{\frac{n-1}{m}}}{\rho^{m}_{n}(\varepsilon)}+M+O\Big(E_{n}^{m}(\varepsilon)\Big),
\end{equation}
where $\mathcal{L}$ is a constant depending only on $m$ and $n$, and
\begin{equation}\label{Enm}
E_{n}^{m}(\varepsilon)=
\begin{cases}\varepsilon^{\frac{1}{4m}}~&\mbox{if}~m\geq2,~n=2,\\
\max\{\varepsilon^{\frac{1}{n-1}},\varepsilon^{\frac{1}{4}}|\log\varepsilon|\}~&\mbox{if}~m=\,n-1,~n\geq3,\\
\varepsilon^{\frac{n-1}{4m}}~&\mbox{if}~m>\,n-1,~n\geq3.
\end{cases} 
\end{equation}
\end{theorem}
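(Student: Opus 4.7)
The plan is to reduce the computation to an explicit integral of $|\nabla\bar v_1|^2$ via the Dirichlet-energy identity
\begin{equation*}
\int_{\Omega} |\nabla v_1|^2 \,dx = \int_{\Omega} |\nabla \bar v_1|^2 \,dx - \int_{\Omega} |\nabla w|^2 \,dx, \qquad w:=v_1-\bar v_1,
\end{equation*}
which follows by writing $\bar v_1 = v_1 - w$, expanding $|\nabla\bar v_1|^2$, and using $\Delta v_1=0$ in $\Omega$ together with $w\in H^1_0(\Omega)$. Proposition~\ref{prop_v1} then guarantees $\int_\Omega|\nabla w|^2 \le C$, so all that matters for the singular asymptotics is the explicit integral of $|\nabla \bar v_1|^2$. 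I would split $\Omega = \Omega_{R_0}\cup (\Omega\setminus\Omega_{R_0})$; by \eqref{nablau_bar_outside} the exterior piece is an $O(1)$ contribution absorbed into $M$. On $\Omega_{R_0}$, the explicit formula \eqref{nablau_bar} separates the integral into the dominant normal contribution
\begin{equation*}
\int_{\Omega_{R_0}} (\partial_{x_n}\bar v_1)^2 \,dx = \int_{B'_{R_0}} \frac{dx'}{\varepsilon+(h_1-h_2)(x')},
\end{equation*}
and the tangential contributions, which are bounded pointwise by $C|x'|^2/(\varepsilon+(h_1-h_2)(x'))^2$ and after the $x_n$-integration yield integrals of $|x'|^2/(\varepsilon+\lambda|x'|^m)$ that are either $O(1)$ or strictly lower order than the normal term under \eqref{h1h24}--\eqref{h1h25}.

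Using \eqref{h1h24}, polar coordinates in $x'$, and the substitution $t = \lambda r^m/\varepsilon$, the normal integral becomes
\begin{equation*}
\frac{\omega_{n-2}}{m\,\lambda^{(n-1)/m}}\, \varepsilon^{(n-1)/m-1}\int_{0}^{\lambda R_0^m/\varepsilon} \frac{t^{(n-1)/m-1}}{1+t}\,dt,
\end{equation*}
which cleanly yields the three regimes of $\rho_n^m(\varepsilon)$: for $m>n-1$ the $t$-integral converges to $\pi/\sin(\pi(n-1)/m)$ (Beta-integral identity) and gives the leading power $\varepsilon^{(n-1)/m-1}=1/\rho_n^m(\varepsilon)$; for $m=n-1$ the integrand is $(1+t)^{-1}$, producing $|\log\varepsilon|$; and for $n=2$, $m\ge 2$ this specialises to the one-dimensional case. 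Matching constants across the three regimes identifies $\mathcal{L}$. The constant $M$ then consists of: (a) $\int_{\Omega\setminus\Omega_{R_0}}|\nabla \bar v_1|^2$, (b) the ``regularised'' tail of the $t$-integral after subtracting the singular asymptotic, (c) the Fubini integral of the tangential derivatives, and (d) minus the limit of $\int_\Omega|\nabla w|^2$ as $\varepsilon\to 0$. To show this limit exists and depends only on $D_1^0,D_2^0,\Omega$, I would compare to the touching-inclusion configuration ($\varepsilon=0$) and argue convergence of $w$ away from the origin, combined with uniform control of the integral in the narrow neck using Proposition~\ref{prop_v1}.

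The main obstacle is tracking the error term $E_n^m(\varepsilon)$ with the correct exponents in \eqref{Enm}. The polynomial rates $\varepsilon^{1/(n-1)}$ and $\varepsilon^{(n-1)/(4m)}$, with their characteristic one-fourth power, strongly suggest a Cauchy--Schwarz (or $H^{1/2}$-type trace) argument applied to the cross-term $\int_\Omega \nabla(v_1-v_1^0)\cdot\nabla(\text{something})$, where $v_1^0$ is the limiting function. Concretely, if $\int_\Omega|\nabla w|^2$ can be shown to equal its $\varepsilon=0$ limit up to $O(\varepsilon^{1/(2m)})$ (say by an energy estimate on the difference with the $\varepsilon=0$ harmonic extension, split across an annulus of width $\varepsilon^{1/(2m)}$ around $0'$), then Cauchy--Schwarz with the bound $\|\nabla w\|_{L^2}\le C$ produces the square-root-type rate $\varepsilon^{1/(4m)}$ in $E_n^m$. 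Carefully choosing the cut-off radius and balancing the neck estimate against the outer estimate is the delicate part; the leading singular term itself is a direct computation once Proposition~\ref{prop_v1} and the orthogonal decomposition are in hand.
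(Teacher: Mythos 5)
Your orthogonality identity $\int_\Omega|\nabla v_1|^2=\int_\Omega|\nabla\bar v_1|^2-\int_\Omega|\nabla w|^2$ is correct (it is the standard ``harmonic minimizes Dirichlet energy'' computation, using $w\in H^1_0(\Omega)$ and $\Delta v_1=0$), and it is a genuinely cleaner reduction than the paper's, which expands $|\nabla v_1|^2=|\nabla\bar v_1|^2+2\nabla\bar v_1\cdot\nabla w+|\nabla w|^2$ region by region and must estimate the cross terms. Your Step‑3--type computation of $\int_{B'_{R_0}}\frac{dx'}{\varepsilon+\lambda|x'|^m}$ in the three regimes also matches the paper's and correctly produces $\rho_n^m(\varepsilon)^{-1}$ and $\mathcal{L}$.

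The genuine gap is in what you yourself flag as ``the main obstacle.'' Your identity only gives $\int_\Omega|\nabla v_1|^2=\mathcal{L}\lambda^{\frac{n-1}{m}}\rho_n^m(\varepsilon)^{-1}+O(1)$, because the two remaining pieces --- $\int_\Omega|\nabla w|^2$ and $\int_{\Omega\setminus\Omega_{R_0}}|\nabla\bar v_1|^2$ --- are $\varepsilon$-dependent (the domain $\Omega$, the extension of $\bar v_1$, and $w$ all move with $\varepsilon$), so they cannot simply be ``absorbed into $M$'': one must prove they converge to $\varepsilon$-independent limits \emph{with error $O(E_n^m(\varepsilon))$}, and that is where essentially all of the paper's work lies (its Lemmas \ref{lem_29} and \ref{lem_49}). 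The mechanism you propose for this --- Cauchy--Schwarz against $\|\nabla w\|_{L^2}\le C$ or against the limiting field --- does not work as stated: in the relevant regimes $m\ge n-1$ one has $\nabla v_1^0\notin L^2$ near the touching point, so any global $L^2$ pairing with the limit problem diverges. What is actually needed is a \emph{weighted pointwise} bound: first a maximum-principle comparison giving $\|v_1-v_1^0\|_{L^\infty}\le C\varepsilon^{1/2}$ outside a neck of radius $\varepsilon^{1/(2m)}$ (comparing boundary values on $\partial D_1$ versus $\partial D_1^0$, and on the lateral boundary of the neck via the explicit $\bar v_1,\bar v_1^0$), then interpolation with interior $C^2$ bounds on cubes of side $|z'|^m$ to get $|\nabla(v_1-v_1^0)|\le C\varepsilon^{1/4}|x'|^{-m}$, and finally integration over the annulus $\varepsilon^{1/(4m)}<|x'|<R_0$ --- this last step is precisely the source of the exponents $\tfrac{n-1}{4m}$ and $\tfrac{1}{4m}$ in \eqref{Enm}. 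Without carrying out this quantitative comparison (which would plug directly into your decomposition by writing $\int|\nabla w|^2-\int|\nabla w^0|^2=\int(\nabla w+\nabla w^0)\cdot\nabla(w-w^0)+\dots$ and using $|\nabla w|,|\nabla w^0|\le C$), the statement with a bona fide constant $M$ and error $E_n^m(\varepsilon)$ is not established. The same issue affects your component (a): the outer integral of $|\nabla\bar v_1|^2$ converges at the right rate only if the extension of $\bar v_1$ off the neck is chosen to depend controllably on $\varepsilon$; the paper sidesteps this by defining $M_1$ through the $\varepsilon$-independent limit solution $v_1^0$.
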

From \eqref{asym2}, one can see that the energy aggregation of $v_{1}$ depends on the local geometry of the inclusions, such as $\lambda$, and the order of convexity $m$. The proof of Theorem \ref{thm_energy2} will be given in Section \ref{sec_asym}.
 
On the other hand, since $\Delta{v}_{b}=0$ in $D$ with $v_{b}=C_{2}$ on $\partial{D}_{1}\cup\partial{D}_{2}$, it follows from the standard elliptic theory that

\begin{theorem}\label{thm2}
Suppose that $0<\varepsilon<1/2$ sufficiently small. There are two positive constants $A,C$, independent of $\varepsilon$, such that
\begin{equation}\label{thm2.5_equ}
|\nabla v_{b}(x',x_{n})|\leq\,C\exp\left(-\frac{A}{(\varepsilon+|x'|^{m})^{1-1/m}}\right)\|v_{b}\|_{L^{2}(\Omega)},\quad\forall~(x',x_{n})\in\,\Omega_{R_{0}}.
\end{equation}
\end{theorem}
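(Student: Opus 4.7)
Set $w:=v_b-C_2$, so $w$ is harmonic in $\Omega$ and vanishes on $\partial D_1\cup\partial D_2$. Writing $d(t):=\varepsilon+t^m$ for the local channel thickness, I would prove the theorem in two stages. First, establish the pointwise decay
\[
|w(x)|\ \le\ C\exp\!\left(-\frac{2A}{d(|x'|)^{1-1/m}}\right)\|w\|_{L^\infty(\Omega\setminus\Omega_{R_0/2})}\qquad\text{for }x\in\Omega_{R_0},
\]
which is the main content. Second, convert it into the gradient bound by a rescaled interior/boundary $C^1$ estimate for the harmonic function $w$ on a cylinder of diameter $\sim d(|x'|)$ touching $\partial D_1$ or $\partial D_2$ (the polynomial loss $1/d(|x'|)$ is absorbed into the exponent by halving $A$, since $1/d(|x'|)^{1-1/m}\to\infty$ dominates any polynomial as $d\to 0$), and replace $\|w\|_{L^\infty(\Omega\setminus\Omega_{R_0/2})}$ by a constant multiple of $\|v_b\|_{L^2(\Omega)}$ using standard interior $L^\infty$--$L^2$ estimates on the regular part of $\Omega$.

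The core is a one-step decay lemma: there exist universal constants $\theta\in(0,1)$ and $c_0>0$ such that, for every $0<s\le R_0$,
\[
\sup_{\Omega_{s-c_0 d(s)}}|w|\ \le\ \theta\,\sup_{\Omega_s}|w|.
\]
After a rotation, I would flatten the slab $(\Omega_s\setminus\Omega_{s-c_0 d(s)})\cap\{|x_j|\le c_0 d(s),\,2\le j\le n-1\}$ by the bi-Lipschitz map
\[
y_1=\frac{s-x_1}{d(s)},\quad y_j=\frac{x_j}{d(s)}\ (2\le j\le n-1),\quad y_n=\frac{x_n-h_2(x')}{\varepsilon+h_1(x')-h_2(x')}.
\]
In the new variables the slab becomes a reference box of $O(1)$ side with $y_n\in(0,1)$; multiplying the transformed equation through by $d(s)^2$, the Laplacian becomes a uniformly elliptic second-order operator whose coefficients differ from $\delta_{jk}$ by $O(|x'|^{m-1})\le O(R_0^{m-1})$, hence can be made as close to $\delta_{jk}$ as desired by shrinking $R_0$ (this is where $m\ge2$ is needed). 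Since $w$ vanishes on $y_n=0,1$, the maximum principle applied against the harmonic barrier $M\cosh(\pi(c_0-y_1))\sin(\pi y_n)/\cosh(\pi c_0)$ with $M:=\sup_{\Omega_s}|w|$, plus a small perturbative correction for the deviation from the Laplacian, delivers the universal factor $\theta=1/\cosh(\pi c_0)<1$.

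Iterating with $s_0:=R_0$ and $s_{k+1}:=s_k-c_0 d(s_k)$ gives $\sup_{\Omega_{s_k}}|w|\le\theta^k\|w\|_{L^\infty(\Omega\setminus\Omega_{R_0/2})}$. Given $x$ with $|x'|\le R_0$, let $N$ be the least index with $s_N\le|x'|$; viewing $N=c_0^{-1}\sum_k(s_k-s_{k+1})/d(s_k)$ as a Riemann sum for $\int_{|x'|}^{R_0}dt/d(t)$ yields
\[
N\ \gtrsim\ \int_{|x'|}^{R_0}\frac{dt}{\varepsilon+t^m}\ \gtrsim\ \frac{1}{(\varepsilon+|x'|^m)^{1-1/m}},
\]
the last step being an elementary computation that splits into the regimes $|x'|^m\ge\varepsilon$ (integrand dominated by $t^{-m}$) and $|x'|^m<\varepsilon$ (integrand of size $\varepsilon^{-1}$ over an interval of length $\sim\varepsilon^{1/m}$). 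Together with the first-paragraph reduction this completes the proof.

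The main obstacle is the coefficient bookkeeping in the second paragraph. The problematic terms in the transformed operator are cross terms such as $(\partial y_n/\partial x_i)(\partial y_n/\partial x_j)$ and $(\partial y_1/\partial x_i)(\partial y_n/\partial x_j)$, governed by $h'_i(x')/(\varepsilon+h_1(x')-h_2(x'))\lesssim|x'|^{m-1}/d(|x'|)$; when $|x'|^m\gg\varepsilon$ this ratio is of order $1/|x'|$, and only after multiplication by the horizontal scale $d(s)\lesssim|x'|^m$ built into the rescaling of $y_1,\dots,y_{n-1}$ does it become $O(|x'|^{m-1})$, bounded uniformly (and small when $R_0$ is small) precisely because $m\ge 2$. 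The hypothesis \eqref{h1h25} is used in exactly this step; once uniformity is verified, the Phragmén--Lindelöf barrier and the iteration are classical.
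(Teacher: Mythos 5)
Your route is genuinely different from the paper's. The paper proves the exponential decay by an \emph{energy} iteration: a Caccioppoli inequality $\int_{\Omega_t(z')}|\nabla v_b|^2\le C(\delta(z')/(s-t))^2\int_{\Omega_s(z')}|\nabla v_b|^2$ iterated $\sim \delta(z')^{-(1-1/m)}$ times to give $\int_{\Omega_{\delta(z')}(z')}|\nabla v_b|^2\le C\exp(-c\,\delta(z')^{-(1-1/m)})$, followed by the same rescaling-plus-$W^{2,p}$-plus-Sobolev step used for Proposition \ref{prop_v1} to pass to a pointwise gradient bound. You instead iterate the \emph{sup norm} of $w=v_b-C_2$ via a Phragm\'en--Lindel\"of barrier in the flattened channel and then recover the gradient by a rescaled boundary $C^1$ estimate. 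Both are classical mechanisms for exponential decay in thin necks; your Riemann-sum count $N\gtrsim\int_{|x'|}^{R_0}dt/(\varepsilon+t^m)\gtrsim(\varepsilon+|x'|^m)^{-(1-1/m)}$ reproduces exactly the paper's exponent, and your absorption of the polynomial loss $1/d$ into the exponential by halving $A$ is fine. The energy route has the advantage of requiring no barrier construction and no smallness of the transformed coefficients beyond uniform ellipticity; the barrier route, once correct, gives the stronger intermediate statement that $|w|$ itself decays exponentially.

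There is, however, a concrete error in the key step: the barrier $M\cosh(\pi(c_0-y_1))\sin(\pi y_n)/\cosh(\pi c_0)$ does not majorize $|w|$ on the lateral boundary of your comparison box, which is precisely where the only available information is $|w|\le M$. On the outer face $y_1=0$ it equals $M\sin(\pi y_n)$, and on the faces $y_j=\pm c_0$ ($2\le j\le n-1$) it is at most $M\sin(\pi y_n)$ as well; since $\sin(\pi y_n)$ vanishes at $y_n=0,1$, the barrier is strictly below $M$ on almost all of the lateral boundary and the maximum principle cannot be invoked. The standard fix is to replace $\sin(\pi y_n)$ by $\cos(\lambda(y_n-\tfrac12))/\cos(\lambda/2)$ with $\lambda<\pi$, which equals $1$ (not $0$) at $y_n=0,1$ and is $\ge 1$ nowhere needed to be small, and to handle all $n-1$ horizontal directions, e.g. by the harmonic supersolution
\begin{equation*}
\Psi(y)=M\,\frac{\cos\bigl(\lambda(y_n-\tfrac12)\bigr)}{\cos(\lambda/2)}\sum_{j=1}^{n-1}\frac{\cosh(\lambda y_j)}{\cosh(\lambda L_j)},
\end{equation*}
which dominates $M$ on every lateral face, vanishes nowhere it shouldn't, and is $\le \theta M$ at the center once $\min_j L_j$ is large; alternatively work on the full annulus $\{s-c_0d(s)<|x'|<s\}$ with a barrier in $(|y'|,y_n)$. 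With that replacement (and the perturbative adjustment of $\lambda$ to absorb the $O(R_0^{m-1})$ coefficient errors, which your bookkeeping correctly identifies), the one-step decay lemma holds and the rest of your argument goes through.
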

Theorem \ref{thm2} implies that
\begin{equation}\label{v2_bdd}
\|\nabla{v}_{b}\|_{L^{\infty}(\Omega_{R_{0}})}\leq\,C.
\end{equation}
So that, combining with the classical elliptic theory,
\begin{equation}\label{v2_bdd2}
\|\nabla{v}_{b}\|_{L^{\infty}(\Omega)}\leq\,C.
\end{equation}

Denote 
\begin{equation}\label{Qphi1}
\mathcal{B}_{\varepsilon}[\varphi]:=-\int_{\partial{D}_{1}}\frac{\partial v_{b}}{\partial\nu^{-}}.
\end{equation}
Substituting \eqref{asym2} and \eqref{v2_bdd2} into \eqref{C1-C2}, we have
\begin{equation}\label{C1-C2_2}
|C_{1}-C_{2}|\leq\,C\rho_{n}^{m}(\varepsilon).
\end{equation}
By using \eqref{C1-C2_2}, we prove, see Lemma \ref{lem6.2} below, that
\begin{equation}\label{C1-C2_3}
\Big|C_{i}-C_{0}\Big|\leq\,C\rho^{m}_{n}(\varepsilon),\quad\,i=1,2.
\end{equation}
This shows that $u_{0}$ defined by \eqref{equ_u0} is the limit of $u$ defined by \eqref{equinfty2}. Furthermore, as for the convergent rate of $\mathcal{B}_{\varepsilon}[\varphi]$ to $\mathcal{B}_{0}[\varphi]$, we have the following estimate.

\begin{prop}\label{prop2}
Let $\mathcal{B}_{\varepsilon}[\varphi]$ and $\mathcal{B}_{0}[\varphi]$ be defined by \eqref{Qphi1} and \eqref{Qphi0}, respectively. Then 

(i) under the assumptions of Theorem \ref{thm1}, we have
\begin{align}\label{B_converge0}
\mathcal{B}_{\varepsilon}[\varphi]-\mathcal{B}_{0}[\varphi]=
O\Big(\rho^{2}_{n}(\varepsilon)\Big),\quad\,n=2,3;
\end{align}

(ii) under the assumptions of Theorem \ref{thmm}, we have
\begin{align}\label{B_converge}
\mathcal{B}_{\varepsilon}[\varphi]-\mathcal{B}_{0}[\varphi]=
O\Big(\rho^{m}_{n}(\varepsilon)\Big)\quad\,m\geq\max\{2,n-1\},~n\geq2.
\end{align}
\end{prop}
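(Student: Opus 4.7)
The plan is to reduce $\mathcal{B}_\varepsilon[\varphi] - \mathcal{B}_0[\varphi]$ to a flux integral of $w := v_b - u_0$ over the portions of boundary common to $\Omega$ and $\Omega^0$, and then to control this flux by uniform and gradient estimates on $w$. For the reduction, the zero-flux constraint in \eqref{equ_u0} together with $\int_{\Omega^0}\Delta u_0 = 0$ gives $\int_{\partial D}\partial_{\nu_D}u_0 = 0$, so that $\mathcal{B}_0[\varphi] = \int_{\partial D_2^0}\partial_{\nu^-}u_0 + \int_{\partial D}\partial_{\nu_D}u_0$; applying $\int_\Omega\Delta v_b = 0$ and $\mathcal{B}_\varepsilon[\varphi]=-\int_{\partial D_1}\partial_{\nu^-}v_b$ analogously yields $\mathcal{B}_\varepsilon[\varphi] = \int_{\partial D_2}\partial_{\nu^-}v_b + \int_{\partial D}\partial_{\nu_D}v_b$. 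Since $\partial D_2 = \partial D_2^0$ and both $v_b,u_0$ are defined in a neighborhood of $\partial D\cup\partial D_2^0$ in $\Omega\cap\Omega^0$, subtraction produces
\[
\mathcal{B}_\varepsilon[\varphi] - \mathcal{B}_0[\varphi] = \int_{\partial D_2^0}\partial_{\nu^-}w + \int_{\partial D}\partial_{\nu_D}w.
\]

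For the pointwise bound I would use that $w$ is harmonic in $\Omega\cap\Omega^0$ and estimate it on that boundary: $w\equiv 0$ on $\partial D$; $w = C_2 - C_0$ on $\partial D_2^0$, of size $O(\rho_n^m(\varepsilon))$ by \eqref{C1-C2_3}; on the ``upper'' piece $\partial D_1^\varepsilon\cap\Omega^0$, $|w|=|C_2-u_0|\le |C_2-C_0|+\|\nabla u_0\|_{L^\infty}\cdot\varepsilon\le C\rho_n^m(\varepsilon)$, since that piece sits within distance $\varepsilon$ of $\partial D_1^0$ (where $u_0\equiv C_0$) and is away from the origin so $\nabla u_0$ is uniformly bounded there; and on the ``lower'' piece $\partial D_1^0\cap\Omega$, which lies inside the narrow region near the origin, $|w|=|v_b-C_0|\le |v_b-C_2|+|C_2-C_0|$, where Theorem \ref{thm2} makes the first term exponentially small along the vertical segment of length $\varepsilon$ joining $x\in\partial D_1^0$ to $x+(0',\varepsilon)\in\partial D_1^\varepsilon$. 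The maximum principle then gives $\|w\|_{L^\infty(\Omega\cap\Omega^0)} \le C\rho_n^m(\varepsilon)$.

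To upgrade to a gradient bound on $\partial D\cup\partial D_2^0$, I would invoke boundary Schauder estimates. On $\partial D$ and on $\partial D_2^0\setminus\overline{\Omega_{R_0}}$, $w$ is harmonic in a nondegenerate cylindrical neighborhood with smooth (constant) Dirichlet data of size $O(\rho_n^m)$, which yields $|\nabla w|\le C\rho_n^m$ there. On the remaining piece $\partial D_2^0\cap\Omega_{R_0}$, where the geometry of $\Omega\cap\Omega^0$ degenerates into a cusp at the origin, I would combine Theorem \ref{thm2} with its analogue for $u_0$ in $\Omega^0$ (both $v_b-C_2$ and $u_0-C_0$ vanish on the two adjacent inclusions, so the same Caccioppoli iteration applies) to conclude that $|\nabla v_b|$ and $|\nabla u_0|$ decay exponentially in $(\varepsilon+|x'|^m)^{-(1-1/m)}$, making $\int_{\partial D_2^0\cap\Omega_{R_0}}|\nabla w|\,dS$ negligible compared to $\rho_n^m(\varepsilon)$. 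Plugging these bounds into the displayed identity and using that $|\partial D|,|\partial D_2^0|$ are universally bounded yields $|\mathcal{B}_\varepsilon[\varphi] - \mathcal{B}_0[\varphi]|\le C\rho_n^m(\varepsilon)$, which is exactly (ii); part (i) follows as the specialization $m=2$, for which $\rho_n^2(\varepsilon) = \rho_n(\varepsilon)$.

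The main obstacle is the cuspidal behavior of $\Omega\cap\Omega^0$ at the origin, where two strictly convex surfaces touch. Standard Schauder estimates degenerate there, so $|\nabla w|$ cannot be bounded pointwise by $\rho_n^m/\mathrm{dist}$. The crux is that $v_b$ and $u_0$ share the structural property that their Dirichlet data agree on the two adjacent inclusions, which suppresses gradient blow-up in the gap and makes Theorem \ref{thm2}-type exponential decay available to absorb the geometric degeneracy.
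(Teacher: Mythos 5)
Your reduction identity is fine (it is a legitimate variant of the paper's, which integrates instead over the half-domain $\Omega^{+}$ and the mid-plane $\gamma=\{x_{n}=0\}$), and your $L^{\infty}$ bound $\|v_{b}-u_{0}-(C_{2}-C_{0})\|_{L^{\infty}}\leq C\varepsilon$ on the shifted boundary pieces plus the maximum principle matches the paper's treatment of $\phi=C_{2}-C_{0}-(v_{b}-u_{0})$. The gap is in the last step, on $\partial D_{2}^{0}\cap\Omega_{R_{0}}$. The exponential bound of Theorem \ref{thm2}, $|\nabla v_{b}|\leq C\exp\bigl(-A(\varepsilon+|x'|^{m})^{-(1-1/m)}\bigr)$, is small only for $|x'|\lesssim |\log\varepsilon|^{-1/(m-1)}$; for $|x'|$ comparable to $R_{0}$ it is a fixed positive constant independent of $\varepsilon$. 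Hence $\int_{\partial D_{2}^{0}\cap\Omega_{R_{0}}}|\nabla v_{b}|\,dS$ and $\int_{\partial D_{2}^{0}\cap\Omega_{R_{0}}}|\nabla u_{0}|\,dS$ are each only $O(1)$, and bounding $|\nabla w|$ by the sum cannot make the contribution ``negligible compared to $\rho_{n}^{m}(\varepsilon)$.'' Nor does the crude alternative work: rescaling the slab of thickness $\sim|x'|^{m}$ and using only $\|w\|_{\infty}\leq C\rho_{n}^{m}$ gives $|\nabla w|\leq C\rho_{n}^{m}|x'|^{-m}$, and for $n=m=2$ the integral over $\delta_{0}<|x'|<R_{0}$ is $\sim\sqrt{\varepsilon}/\delta_{0}$, which exceeds $\rho_{2}^{2}=\sqrt{\varepsilon}$ unless $\delta_{0}$ is bounded below — while the exponential decay requires $\delta_{0}\to0$. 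The two regimes do not meet, so there is a whole intermediate annulus where your estimates fail.

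What closes this gap in the paper is a sharper gradient estimate for the \emph{difference} in the intermediate region, exploiting that the Dirichlet discrepancy of $\phi$ in the gap is $O(\varepsilon)$ (on $\partial D_{1}^{0}\setminus D_{1}$ and $\partial D_{1}\setminus D_{1}^{0}$) and exactly zero on $\partial D_{2}$ — much better than the global $O(\rho_{n}^{m})$ bound. Concretely, the paper introduces the comparison function $\bar\phi=\varepsilon\,\partial_{x_{n}}v_{b}(x',\xi_{n})\,\bar v_{1}^{0}$, shows $\|\nabla(\phi-\bar\phi)\|\leq C\varepsilon$ by the argument of Proposition \ref{prop_v1}, and obtains $|\partial_{x_{n}}\phi|\leq C\varepsilon|y'|^{-m}+C\varepsilon$ on the middle piece $\gamma_{2}$; then $\varepsilon\int_{\gamma_{2}}|y'|^{-m}\,dS=O(\rho_{n}^{m}(\varepsilon))$ with the inner cutoff $|x'|\sim\varepsilon^{\frac{n-1}{m(m-n+1)}}$, below which the exponential decay takes over. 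You need some version of this refined estimate (an $O(\varepsilon/|x'|^{m})$ rather than $O(\rho_{n}^{m}/|x'|^{m})$ bound on $|\nabla w|$ in the annulus) for your integral over $\partial D_{2}^{0}\cap\Omega_{R_{0}}$; without it the argument does not yield the claimed rate.
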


This convergence rate is optimal because of \eqref{C1-C2_3}. The proof of Proposition \ref{prop2} will be given in Section \ref{Sec_prop2}. We are now in position to prove Theorems \ref{thm1} and \ref{thmm}.

\begin{proof}[Proof of Theorem \ref{thm1}] By using \eqref{decom_u2}, \eqref{nabla_w_in} and \eqref{v2_bdd},
\begin{equation*}\label{decomposition_u3}
\nabla{u}=(C_{1}-C_{2})\nabla\bar{v}_{1}+O(1),\quad\,\mbox{in}~~\Omega_{R_{0}}.
\end{equation*}
It follows from Proposition \ref{prop2} that
\begin{align*}
C_{1}-C_{2}=\dfrac{-\int_{\partial{D}_{1}}\frac{\partial v_{b}}{\partial\nu^{-}}}{\int_{\partial{D}_{1}}\frac{\partial v_{1}}{\partial\nu^{-}}}=\frac{\mathcal{B}_{\varepsilon}[\varphi]}{\int_{\Omega}|\nabla{v}_{1}|^{2}}.
\end{align*}
Thus, using \eqref{asym1} and \eqref{B_converge}, 
\begin{align}\label{equ-final}
\nabla{u}(x)=&(C_{1}-C_{2})\nabla\bar{v}_{1}(x)+O(1)\nonumber\\
=&\frac{\mathcal{B}_{0}[\varphi]+O\big(\rho_{n}(\varepsilon)\big)}{\frac{\kappa_{n}}{\rho_{n}(\varepsilon)}+M+O\big(E_{n}(\varepsilon)\big)}
\nabla\bar{v}_{1}(x)+O(1).
\end{align}
In view of the definition of $\rho_{n}(\varepsilon)$, \eqref{def_rhon}, Theorem \ref{thm1} follows easily from the above.
\end{proof}

\begin{proof}[Proof of Theorem \ref{thmm}] Replacing \eqref{asym1} by \eqref{asym2} in \eqref{equ-final}, we have
\begin{align*}
\nabla{u}(x)=&(C_{1}-C_{2})\nabla\bar{v}_{1}(x)+O(1)\nonumber\\
=&\frac{\mathcal{B}_{0}[\varphi]+O\big(\rho^{m}_{n}(\varepsilon)\big)}{\frac{\mathcal{L}\,\lambda^{\frac{n-1}{m}}}{\rho_{n}^{m}(\varepsilon)}+M+O\big(E_{n}^{m}(\varepsilon)\big)}
\nabla\bar{v}_{1}(x)+O(1).
\end{align*}
The proof is completed by a direct computation.
\end{proof}

The rest of this paper is organized as follows. We establish the pointwise upper and lower bound estimates of $|\nabla{v}_{1}|$ in Section \ref{sec_v1}. The asymptotics of the energy of $v_{1}$ for $m$-convex inclusions is proved in Section \ref{sec_asym}. The proof of Theorem \ref{thm2} and Proposition \ref{prop2} is given in Section \ref{Sec_prop2}.

\bigskip

\section{The gradient estimates of ${v}_{1}$}\label{sec_v1}

This section is devoted to the estimates of $|\nabla{v}_{1}|$ for $m$-convexity inclusions with zero-curvature.

\begin{proof}[Proof of Proposition \ref{prop_v1}]

For simplicity, denote
\begin{equation*}\label{def_w}
w:=v_{1}-\bar{v}_{1}.
\end{equation*}
By the definition of $v_{1}$ in \eqref{equv1}, and $v_{1}=\bar{v}_{1}$ on $\partial{D}_{1}\cup\partial{D}_{2}\cup\partial{D}$, we have
\begin{equation}\label{w20}
\begin{cases}
-\Delta{w}=\Delta\bar{v}_{1}&\mbox{in}~\Omega,\\
w=0&\mbox{on}~\partial \Omega.
\end{cases}
\end{equation}
In view of \eqref{ubar} and \eqref{nablau_bar_outside}, 
\begin{equation}\label{nabla_u_out}
\|\bar{v}_{1}\|_{C^{2,\alpha}(\Omega\setminus\Omega_{R_{0}/3})}\leq\,C.\end{equation}
Using the standard elliptic theory, we have
\begin{equation}\label{nabla_w_out}
|w|+\left|\nabla{w}\right|+\left|\nabla^{2}w\right|\leq\,C,
\quad\mbox{in}~~\Omega\setminus\Omega_{R_{0}/2}.
\end{equation}
Thus, to show \eqref{nabla_w_in}, we only need to prove
$$\left\|\nabla{w}\right\|_{L^{\infty}(\Omega_{R_{0}/2})}\leq\,C.$$

First, we claim that 
\begin{equation}\label{energy_w}
\int_{\Omega}\left|\nabla{w}\right|^{2}\leq\,C.
\end{equation}
Indeed, by the maximum principle, we have $0<v_{1}<1$. Becuase $\bar{v}_{1}$ is also bounded, 
\begin{equation}\label{w_bdd}
\|w\|_{L^{\infty}(\Omega)}\leq\,C.
\end{equation}
A direct computation yields, 
\begin{equation}\label{delta_ubar}
|\Delta\bar{v}_{1}(x)|\leq\frac{C}{\delta(x')},\quad\mbox{where}~\delta(x')=\varepsilon+h_{1}(x')-h_{2}(x'),\qquad\,x\in\Omega_{R_{0}}.
\end{equation}
Now multiplying the equation in \eqref{w20} by $w$, integrating by parts, and making use of \eqref{nabla_u_out}, \eqref{w_bdd} and \eqref{delta_ubar}, 
\begin{align*}\label{energy_w_1}
\int_{\Omega}|\nabla{w}|^{2}
=\int_{\Omega}w\left(\Delta\bar{v}_{1}\right)
\leq\|w\|_{L^{\infty}(\Omega)}\left(\int_{\Omega_{R_{0}}}|\Delta\bar{v}_{1}|+C\right)\leq\,C.
\end{align*}
Thus, \eqref{energy_w} is proved.

 For $0<t<s<R_{0}$, let $\eta$ be a smooth cutoff function satisfying $\eta(x')=1$ if $|x'-z'|<t$, $\eta(x')=0$ if $|x'-z'|>s$, $0\leq\eta(x')\leq1$ if $t\leq|x'-z'|\leq\,s$, and $|\nabla_{x'}\eta(x')|\leq\frac{2}{s-t}$. Multiplying the equation in \eqref{w20} by $w\eta^{2}$ and integrating by parts leads  to the following Caccioppolli's type inequality
\begin{equation}\label{FsFt11}
\int_{\Omega_{t}(z')}|\nabla{w}|^{2}\leq\,\frac{C}{(s-t)^{2}}\int_{\Omega_{s}(z')}|w|^{2}
+C(s-t)^{2}\int_{\Omega_{s}(z')}\left|\Delta\bar{v}_{1}\right|^{2},
\end{equation}
where
$$ \Omega_r(z'):=\left\{(x',x_{n})\in \mathbb{R}^{n}~\big|~h_{2}(x')<x_{n}<\varepsilon+h_{1}(x'),~|x'-z'|<r\right\}.$$

The rest of the proof is divided into two steps. By an iteration technique developed in \cite{bll}, we first have

\noindent{\bf STEP 1.} Proof of
\begin{equation}\label{energy_w_inomega_z1}
\int_{\Omega_{\delta}(z')}\left|\nabla{w}\right|^{2}dx\leq\,C\delta(z')^{n},\quad\mbox{for}~~n\geq2,
\end{equation}
where
\begin{equation*}\label{delta_x}
\delta=\delta(z'):=\varepsilon+h_{1}(z')-h_{2}(z'),\quad\mbox{for}~(z',z_{n})\in\Omega_{R_{0}}.
\end{equation*}

We adapt the iteration technique developed in \cite{bll} and give a unified iteration process. For $0<s<|z'|\leq\,R_{0}/2$, we note that by using H\"{o}lder inequality,
\begin{align*}%\label{energy_w_square}
\int_{\Omega_{s}(z')}|w|^{2}=\int_{\Omega_{s}(z')}\Big(\int_{h_{2}(x')}^{x_{n}}\partial_{x_{n}}w\Big)^{2}
\leq&\,C\delta(z')^{2}\int_{\Omega_{s}(z')}|\nabla{w}|^{2}, \quad\mbox{if}~\,0<s<\frac{2|z'|}{3}.
\end{align*}
Substituting it into \eqref{FsFt11} and denoting
$$F(t):=\int_{\Omega_{t}(z')}|\nabla{w}|^{2},$$
we have
\begin{equation}\label{tildeF111}
F(t)\leq\,\left(\frac{C_{0}\delta(z')}{s-t}\right)^{2}F(s)+C(s-t)^{2}\int_{\Omega_{s}(z')}\left|\Delta\bar{v}_{1}\right|^{2},
\quad\forall~0<t<s<\frac{2|z'|}{3},
\end{equation}
where $C_0$ is a fixed positive universal constant.

Let $k=\left[\frac{\max\{\varepsilon^{1/m},|z'|\}}{4C_{0}\delta(z')}\right]$ and $t_{i}=\delta+2C_{0}i\,\delta(z')$, $i=0,1,2,\cdots,k$. Taking $s=t_{i+1}$ and $t=t_{i}$ in \eqref{tildeF111}, and in view of \eqref{delta_ubar}, 
\begin{equation}\label{integal_Lubar11}
\int_{\Omega_{t_{i+1}}(z')}\left|\Delta\bar{v}_{1}\right|^{2}\leq\int_{|x'-z'|<t_{i+1}}\frac{C}{\delta(x')}dx'\leq\frac{Ct_{i+1}^{n-1}}{\delta(z')}\leq\,C(i+1)^{n-1}\delta(z')^{(n-2)}.
\end{equation}
We obtain an iteration formula
$$F(t_{i})\leq\,\frac{1}{4}F(t_{i+1})+C(i+1)^{n-1}\delta(z')^{n}.$$
After $k$ iterations, using (\ref{energy_w}),
\begin{eqnarray*}
F(t_{0}) \leq (\frac{1}{4})^{k}F(t_{k})+C\delta(z')^{n}\sum_{l=1}^{k}(\frac{1}{4})^{l-1}l^{n-1}
\leq C\delta(z')^{n}.
\end{eqnarray*}
This implies that \eqref{energy_w_inomega_z1}.

\noindent{\bf STEP 2.} Next, we use Sobolev embedding theorem and classical $L^{p}$ estimates for elliptic equations to prove \eqref{nabla_w_in}.

By using the following scaling and translating of variables
\begin{equation*}%\label{changeofvariant}
 \left\{
  \begin{aligned}
  &x'-z'=\delta(z') y',\\
  &x_n=\delta(z') y_n,
  \end{aligned}
 \right.
\end{equation*}
then $\Omega_{\delta(z')}(z')$ becomes $Q_{1}$, where for $r\leq1$,
$$Q_{r}=\left\{y\in\mathbb{R}^{n}\Big|~\frac{1}{\delta(z')}h_{2}(\delta(z'){y}'+z')<y_{n}
<\frac{\varepsilon}{\delta(z')}+\frac{1}{\delta(z')}h_{1}(\delta(z'){y}'+z'),|y'|<r\right\},$$ and the top and
bottom boundaries respectively
become
$$
y_n=\hat{h}_{1}(y'):=\frac{1}{\delta(z')}
\left(\varepsilon+h_{1}(\delta(z')\,y'+z')\right),\quad|y'|<1,$$
and
$$y_n=\hat{h}_{2}(y'):=\frac{1}{\delta(z')}h_{2}(\delta(z')\,y'+z'), \quad |y'|<1.
$$
 Then
$$\hat{h}_{1}(0')-\hat{h}_{2}(0'):=\frac{1}{\delta(z')}\left(\varepsilon+h_{1}(z')-h_{2}(z')\right)=1,$$
and by \eqref{h1h20},
$$|\nabla_{x'}\hat{h}_{1}(0')|+|\nabla_{x'}\hat{h}_{2}(0')|\leq\,C|z'|^{m-1},
\quad|\nabla_{x'}^{2}\hat{h}_{1}(0')|+|\nabla_{x'}^{2}\hat{h}_{2}(0')|\leq\,C|z'|^{m-2}.$$
Since $R_{0}$ is small, $\|\hat{h}_{1}\|_{C^{1,1}((-1,1)^{n-1})}$ and $\|\hat{h}_{2}\|_{C^{1,1}((-1,1)^{n-1})}$ are small and $Q_{1}$ is essentially a unit square (or a unit cylinder for $n=3$) as far as applications of the Sobolev embedding theorem and classical $L^{p}$ estimates for elliptic equations are concerned.
Let
\begin{equation*}%\label{def_U}
\overline{V}_{1}(y', y_n):=\bar{v}_{1}(z'+\delta(z'){y}',\delta(z'){y}_{n}),\quad\,W(y', y_n):=w(z'+\delta(z'){y}',\delta(z'){y}_{n}),
\quad\,y\in{Q}'_{1},
\end{equation*}
then by \eqref{w20},
\begin{align*}%\label{syswidew2}
-\Delta{W}
=\Delta\overline{V}_{1},
\quad\quad\,y\in{Q_{1}},
\end{align*}
where
$$\left|\Delta\overline{V}_{1}\right|=\delta(z')^{2}\left|\Delta\bar{v}_{1}\right|.$$

Since $W=0$ on the top and
bottom boundaries of $Q_{1}$, using
the Poincar\'{e} inequality,
$$\left\|W\right\|_{H^{1}(Q_{1})}\leq\,C\left\|\nabla{W}\right\|_{L^{2}(Q_{1})}.$$
By $W^{2,p}$ estimates for elliptic equations (see e.g. \cite{gt}), 
the Sobolev embedding theorems, and using the bootstrap argument, with $p>n$,
\begin{align*}
\left\|\nabla{W}\right\|_{L^{\infty}(Q_{1/2})}\leq\,
C\left\|W\right\|_{W^{2,p}(Q_{1/2})}
\leq\,C\left(\left\|\nabla{W}\right\|_{L^{2}(Q_{1})}+\left\|\Delta\overline{V}_{1}\right\|_{L^{\infty}(Q_{1})}\right).
\end{align*}
It follows from $\nabla{W}=\delta\nabla{w}$ and \eqref{delta_ubar},\eqref{energy_w_inomega_z1} that
\begin{align}\label{equ3.18}
&\left\|\nabla{w}\right\|_{L^{\infty}(\Omega_{\delta(z')/2}(z'))}\nonumber\\
&\leq\,
C\left(\delta(z')^{-n/2}\left\|\nabla{w}\right\|_{L^{2}(\Omega_{\delta(z')}(z'))}
+\delta(z')\left\|\Delta\bar{v}_{1}\right\|_{L^{\infty}(\Omega_{\delta(z')}(z'))}\right)\leq\,C.
\end{align}
Proposition \ref{prop_v1} is established.
\end{proof}

\begin{remark}%\label{rem31}
We point out that the estimate involving $\Delta\bar{v}_{1}$ is very crucial in the above proof, such as \eqref{integal_Lubar11} and \eqref{equ3.18}, for $\int_{\Omega_{t_{i+1}}(z')}\left|\Delta\bar{v}_{1}\right|^{2}
$ and $\delta(z')\left\|\Delta\bar{v}_{1}\right\|_{L^{\infty}(\Omega_{\delta(z')}(z'))}$. 
\end{remark}

An immediate consequence of Proposition \ref{prop_v1} is that
\begin{corollary} Under the assumption as in Proposition \ref{prop_v1},
\begin{equation}\label{nabla_v1_in}
\frac{1}{C(\varepsilon+(h_{1}-h_{2})(x'))}\leq|\nabla{v}_{1}(x',x_{n})|\leq\frac{C}{\varepsilon+ (h_{1}-h_{2})(x')},\qquad~(x',x_{n})\in\Omega_{R_{0}},
\end{equation}
and
\begin{equation}\label{nabla_v1_o}
\|\nabla v_{1}\|_{L^{\infty}(\Omega\setminus\Omega_{R_{0}})}\leq\,C.
\end{equation}
\end{corollary}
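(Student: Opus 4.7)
The plan is to treat the corollary as a direct consequence of Proposition \ref{prop_v1}, reading off $\nabla v_1$ through the splitting $\nabla v_1 = \nabla\bar v_1 + \nabla w$ with $w := v_1-\bar v_1$. By Proposition \ref{prop_v1}, $\|\nabla w\|_{L^\infty(\Omega)}\le C$, so $\nabla v_1$ inherits its size profile from $\nabla\bar v_1$, whose components are given explicitly by \eqref{nablau_bar}. Writing $\delta(x'):=\varepsilon+(h_1-h_2)(x')$, the key observation is that in $\Omega_{R_0}$ the singular component $\partial_{x_n}\bar v_1 = 1/\delta(x')$ carries a sign, while the tangential components are of smaller order $|x'|/\delta(x')$.

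For the upper bound in \eqref{nabla_v1_in}, I would estimate
\[
|\nabla\bar v_1(x)| \le |\partial_{x_n}\bar v_1| + |\nabla_{x'}\bar v_1| \le \frac{1+C|x'|}{\delta(x')} \le \frac{C}{\delta(x')}, \qquad x\in\Omega_{R_0},
\]
and then invoke the triangle inequality together with Proposition \ref{prop_v1} to get $|\nabla v_1|\le \tfrac{C}{\delta(x')}+C$. Since $\delta(x')$ is bounded above by a universal constant on $\Omega_{R_0}$, the additive $C$ is absorbed into a larger multiple of $1/\delta(x')$, yielding the claimed upper bound.

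For the lower bound I would exploit the positive sign of $\partial_{x_n}\bar v_1$: from \eqref{nablau_bar} and Proposition \ref{prop_v1},
\[
\partial_{x_n} v_1(x) = \frac{1}{\delta(x')} + \partial_{x_n} w(x) \ge \frac{1}{\delta(x')} - C_1
\]
for some universal $C_1$. Because $R_0$ is a small universal constant and \eqref{h1h2m} gives $\delta(x') \le \varepsilon + CR_0^m$ on $\Omega_{R_0}$, I may take $R_0$ small enough (and $\varepsilon$ sufficiently small) so that $1/\delta(x') \ge 2C_1$ throughout $\Omega_{R_0}$. Then $|\partial_{x_n} v_1| \ge \tfrac{1}{2\delta(x')}$, and since $|\nabla v_1|\ge |\partial_{x_n} v_1|$, the desired lower bound follows.

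Finally, \eqref{nabla_v1_o} is just a bookkeeping step, combining the estimates already produced in the proof of Proposition \ref{prop_v1}: the interior regularity bound \eqref{nabla_u_out} gives $\|\nabla\bar v_1\|_{L^\infty(\Omega\setminus\Omega_{R_0/3})}\le C$, and \eqref{nabla_w_out} gives $\|\nabla w\|_{L^\infty(\Omega\setminus\Omega_{R_0/2})}\le C$, so $\|\nabla v_1\|_{L^\infty(\Omega\setminus\Omega_{R_0})}\le C$. I do not expect any substantial obstacle here; the only delicate point is Step~2, where the $O(1)$ perturbation from $\nabla w$ must be dominated by the singular term $1/\delta(x')$, which is guaranteed by the smallness of $R_0$ built into the set-up.
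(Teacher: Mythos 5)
Your argument is correct and is precisely the intended one: the paper states the corollary without proof as an immediate consequence of Proposition \ref{prop_v1}, and your route — splitting $\nabla v_1=\nabla\bar v_1+\nabla w$, reading the upper and lower bounds off the explicit formula \eqref{nablau_bar} for $\partial_{x_n}\bar v_1=1/\delta(x')$, and absorbing the universal $O(1)$ from $\nabla w$ using the smallness of $R_0$ — is exactly how it is meant to be derived. The only point worth being careful about is that the constant $C_1$ from Proposition \ref{prop_v1} holds on all of $\Omega$ and does not degrade when $R_0$ is shrunk, so the a-posteriori choice of $R_0$ with $\varepsilon+CR_0^m\le 1/(2C_1)$ is not circular.
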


\bigskip

\section{Proof of Theorem \ref{thm_energy2}}\label{sec_asym}

Define $v_{1}^{0}$ to be the solution of the limiting problem
\begin{equation}\label{equv10}
\begin{cases}
\Delta{v}_{1}^{0}=0&\mbox{in}~\Omega^{0},\\
v_{1}^{0}=1&\mbox{on}~\partial{D}_{1}^{0}\setminus\{0\},\\
v_{1}^{0}=0&\mbox{on}~\partial{D}_{2}^{0}\cup\partial{D}.
\end{cases}
\end{equation}
Similarly as $\bar{v}_{1}$, we construct an auxiliary function $\bar{v}_{1}^{0}$, such that $\bar{v}_{1}^{0}=1$ on $\partial{D}_{1}^{0}\setminus\{0\}$, $\bar{v}_{1}^{0}=0$ on $\partial{D}_{2}^{0}\cup\partial{D}$,
\begin{equation}\label{def_barv10}
\bar{v}_{1}^{0}=\frac{x_{n}-h_{2}(x')}{(h_{1}-h_{2})(x')}\quad\mbox{in}~\Omega^{0}_{R_{0}}:=\Big\{(x',x_{n})\big|~h_{2}(x')\leq\,x_{n}\leq\,h_{1}(x'),~|x'|\leq\,R_{0}\Big\},
\end{equation}
and $\|\bar{v}_{1}^{0}\|_{C^{2,\alpha}(\Omega^{0}\setminus\Omega^{0}_{R_{0}})}\leq\,C$. It is easy to see that
\begin{equation}\label{nablau_starbar}
\left|\partial_{x'}\bar{v}_{1}^{0}(x)\right|\leq\frac{C}{|x'|},\qquad
\partial_{x_{n}}\bar{v}_{1}^{0}(x)=\frac{1}{(h_{1}-h_{2})(x')},\quad~x\in\Omega_{R_{0}}^{0}\setminus\{0\}.
\end{equation}

It follows from the proof of Proposition \ref{prop_v1} that 
\begin{equation}\label{nablaw_0star}
\left\|\nabla(v_{1}^{0}-\bar{v}_{1}^{0})\right\|_{L^{\infty}(\Omega^{0})}\leq\,C.
\end{equation}
This shows that $\nabla\bar{v}_{1}^{0}$ is also the main term of $\nabla{v}_{1}^{0}$.

\begin{lemma}\label{lem_29}
Let $v_{1}$ and $v_{1}^{0}$ be defined by \eqref{equv1} and \eqref{equv10}, respectively. Then
\begin{equation}\label{v1-v1*}
\|v_{1}-v_{1}^{0}\|_{L^{\infty}\Big(\Omega\setminus{\big(D_{1}\cup{D}_{2}\cup{D}_{1}^{0}\cup\Omega_{\varepsilon^{1/(2m)}}\big)}\Big)}\leq\,C\varepsilon^{1/2},\qquad\,i=1,2.
\end{equation}
\end{lemma}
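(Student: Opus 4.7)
The plan is to apply the maximum principle to $w := v_1 - v_1^0$ on the common domain
$$\Omega^\ast := \Omega\setminus \bigl(D_1\cup D_2\cup D_1^0\cup \Omega_{\varepsilon^{1/(2m)}}\bigr),$$
where both $v_1$ and $v_1^0$ are defined and harmonic, so that $w$ is harmonic on $\Omega^\ast$. It suffices to show $|w|\le C\varepsilon^{1/2}$ on $\partial\Omega^\ast$, which splits into four pieces: (i) $\partial D\cup\partial D_2$, where $v_1=v_1^0=0$; (ii) $\partial D_1\setminus \overline{D_1^0}$, where $v_1=1$; (iii) $\partial D_1^0\setminus \overline{D_1}$, where $v_1^0=1$; and (iv) the cylindrical boundary $\{|x'|=\varepsilon^{1/(2m)},\ h_2(x')\le x_n\le h_1(x')\}$ inherited from $\partial\Omega_{\varepsilon^{1/(2m)}}$.

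Piece (i) is trivial. For piece (iii), near the origin $\partial D_1^0$ is parameterized by $(x',h_1(x'))$, and only the portion with $|x'|\ge\varepsilon^{1/(2m)}$ lies in $\partial\Omega^\ast$. Integrating the gradient bound \eqref{nabla_v1_in} along the vertical segment of length $\varepsilon$ from $(x',h_1(x'))$ to $(x',\varepsilon+h_1(x'))\in\partial D_1$ yields
$$|v_1(x',h_1(x'))-1|\le\int_{h_1(x')}^{\varepsilon+h_1(x')}|\partial_{x_n} v_1(x',t)|\,dt\le\frac{C\varepsilon}{\varepsilon+(h_1-h_2)(x')}\le C\varepsilon^{1/2},$$
since $(h_1-h_2)(x')\ge C^{-1}|x'|^m\ge C^{-1}\varepsilon^{1/2}$ in this range. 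Away from the origin, $v_1$ is uniformly Lipschitz by \eqref{nabla_v1_o}, and the $\varepsilon$-translation between $\partial D_1$ and $\partial D_1^0$ gives $|v_1-1|\le C\varepsilon$. Piece (ii) is handled symmetrically, replacing \eqref{nabla_v1_in} by the analogous bound for $v_1^0$ implicit in \eqref{nablaw_0star}.

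For the cylindrical piece (iv), decompose
$$w=(v_1-\bar{v}_1)+(\bar{v}_1-\bar{v}_1^0)+(\bar{v}_1^0-v_1^0).$$
Since $v_1-\bar{v}_1$ vanishes on $\partial D_1\cup\partial D_2$ and $\|\nabla(v_1-\bar{v}_1)\|_{L^\infty(\Omega)}\le C$ by Proposition \ref{prop_v1}, any point $(x',x_n)$ with $|x'|=\varepsilon^{1/(2m)}$ and $x_n\in[h_2(x'),h_1(x')]$ lies within distance at most $(h_1-h_2)(x')\le C\varepsilon^{1/2}$ of $\partial D_2$, giving $|v_1-\bar{v}_1|\le C\varepsilon^{1/2}$. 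The same argument with \eqref{nablaw_0star} yields $|v_1^0-\bar{v}_1^0|\le C\varepsilon^{1/2}$. The middle term is computed directly from \eqref{ubar} and \eqref{def_barv10}:
$$\bar{v}_1-\bar{v}_1^0=-\frac{(x_n-h_2(x'))\,\varepsilon}{(h_1-h_2)(x')\bigl(\varepsilon+(h_1-h_2)(x')\bigr)},\qquad |\bar{v}_1-\bar{v}_1^0|\le\frac{\varepsilon}{(h_1-h_2)(x')}\le C\varepsilon^{1/2},$$
using $x_n-h_2(x')\le(h_1-h_2)(x')$ on this cylinder. Summing the three contributions, $|w|\le C\varepsilon^{1/2}$ on (iv), and the maximum principle delivers the same bound throughout $\Omega^\ast$.

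The delicate step is clearly piece (iv): all three error terms reach order $\varepsilon^{1/2}$ precisely at the threshold $|x'|=\varepsilon^{1/(2m)}$. The middle term $\varepsilon/(h_1-h_2)(x')$ demands a \emph{lower} bound $(h_1-h_2)(x')\ge C^{-1}\varepsilon^{1/2}$, while the auxiliary closeness bounds $|v_1-\bar{v}_1|,|v_1^0-\bar{v}_1^0|\le C(h_1-h_2)(x')$ demand an \emph{upper} bound $(h_1-h_2)(x')\le C\varepsilon^{1/2}$. The excision radius $\varepsilon^{1/(2m)}$ is the unique choice (up to universal constants) that balances these opposing requirements, and this balance is exactly what produces the $\varepsilon^{1/2}$ rate of the lemma.
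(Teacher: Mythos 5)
Your proposal is correct and follows essentially the same route as the paper: a maximum-principle argument with the boundary split into the same pieces, the mean-value/gradient bounds \eqref{nabla_v1_in}, \eqref{nabla_v1_o} on $\partial D_1^0\setminus D_1$ and $\partial D_1\setminus D_1^0$, and the identical three-term decomposition $w=(v_1-\bar v_1)+(\bar v_1-\bar v_1^0)+(\bar v_1^0-v_1^0)$ on the lateral cylinder at $|x'|=\varepsilon^{1/(2m)}$. Your closing remark on why $\varepsilon^{1/(2m)}$ is the balancing radius is a nice addition but not a deviation.
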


\begin{proof}
We will first consider the difference $v_{1}-v_{1}^{0}$ on the boundary of $\Omega\setminus(D_{1}\cup{D}_{2}\cup{D}_{1}^{0}\cup\Omega_{\varepsilon^{1/(2m)}})$, then use the maximum principle to obtain \eqref{v1-v1*}.

{\bf STEP 1.}
Obviously, 
\begin{equation}\label{lem3.1_equ1}
v_{1}-v_{1}^{0}=0,\quad\mbox{ on}~ \partial{D}_{2}\cup\partial{D}.\end{equation}
In the following we only need to deal with the boundary $\partial(D_{1}\cup{D}_{1}^{0})$. We divide it into two parts: (a) $\partial{D}_{1}^{0}\setminus{D}_{1}$ and (b) $\partial{D}_{1}\setminus{D}_{1}^{0}$.

(a) When $x\in\partial{D}_{1}^{0}\setminus{D}_{1}$, we introduce a cylinder
$$\mathcal{C}_{r}:=\left\{x\in\mathbb{R}^{n}~\big|~2\min_{|x'|=r}h_{2}(x')\leq\,x_{n}\leq\varepsilon+2\max_{|x'|=r}h_{1}(x'),~|x'|<r\right\},\quad\,r\leq\,R_{0}.$$
(a1) For $x\in\partial{D}_{1}^{0}\cap(\mathcal{C}_{R_{0}}\setminus\mathcal{C}_{\varepsilon^{1/(2m)}})$, using $v_{1}^{0}=1$ on $\partial{D}_{1}^{0}$ and $v_{1}=1$ on $\partial{D}_{1}$, by mean value theorem and estimate \eqref{nabla_v1_in}, we have, for some $\theta_{\varepsilon}\in(0,1)$
\begin{align*}
|v_{1}(x)-v_{1}^{0}(x)|=|v_{1}(x)-1|=&\left|v_{1}(x',h_{1}(x'))-v_{1}(x',\varepsilon+h_{1}(x'))\right|\\
=&\left|\partial_{x_{n}}v_{1}(x',\theta_{\varepsilon}\varepsilon+h_{1}(x'))\right|\cdot\varepsilon\\
\leq&\,\frac{C\varepsilon}{\varepsilon+|x'|^{m}}
\leq\,C\varepsilon^{1/2}.
\end{align*}
(a2) For $x\in\partial{D}_{1}^{0}\cap(\Omega\setminus\Omega_{R_{0}})$, there exists $y_{\varepsilon}\in\partial{D}_{1}\cap\overline{\Omega\setminus\Omega_{R_{0}/2}}$ such that $|x-y_{\varepsilon}|<C\varepsilon$ (note that $v_{1}(y_{\varepsilon})=1$). By \eqref{nabla_v1_o} and mean value theorem again, for some $\theta_{\varepsilon}\in(0,1)$
$$|v_{1}(x)-v_{1}^{0}(x)|=|v_{1}(x)-1|=|v_{1}(x)-v_{1}(y_{\varepsilon})|
\leq|\nabla{v}_{1}((1-\theta_{\varepsilon})x+\theta_{\varepsilon}{y}_{\varepsilon})||x-y_{\varepsilon}|\leq\,C\varepsilon.$$

(b) When $x\in\partial{D}_{1}\setminus{D}_{1}^{0}$, since $0<v_{1}<1$ in $\Omega$ and $\Delta{v}_{1}=0$ in $\Omega$, it follows from the boundary estimates of harmonic function that there exists $y_{x}\in\Omega$, $|y_{x}-x|\leq\,C\varepsilon$ such that $v_{1}(y_{x})=v_{1}^{0}(x)$. Using \eqref{nabla_v1_o} again, 
$$|v_{1}(x)-v_{1}^{0}(x)|=|v_{1}(x)-v_{1}(y_{x})|\leq\,\|\nabla{v}_{1}\|_{L^{\infty}(\Omega\setminus\Omega_{R_{0}})}|x-y_{x}|\leq\,C\varepsilon.$$
Therefore,
\begin{equation}\label{onboundaryD1}
|v_{1}(x)-v_{1}^{0}(x)|\leq\,C\varepsilon^{1/2},\quad\mbox{for}~x\in\partial(D_{1}\cup{D}_{1}^{0})\setminus\mathcal{C}_{\varepsilon^{1/(2m)}}.
\end{equation}

{\bf STEP 2.} We consider the lateral boundary of $\Omega_{\varepsilon^{1/(2m)}}^{0}$, 
$$S_{1/(2m)}:=\Big\{(x',x_{n})~\big| ~h_{2}(x')\leq\,x_{n}\leq\,h_{1}(x'),~|x'|=\varepsilon^{1/(2m)}\Big\},$$ By using \eqref{nabla_w_in} and $(v_{1}-\bar{v}_{1})=0$ on $\partial{D}_{2}$, we have, for $x\in\,S_{1/(2m)}$, 
\begin{equation}\label{part1}
|(v_{1}-\bar{v}_{1})(x)|\leq\left\|\nabla(v_{1}-\bar{v}_{1})\right\|_{L^{\infty}(S_{1/(2m)})}\left|(h_{1}-h_{2})(x')\right|\leq\,C|x'|^{m}\leq\,C\varepsilon^{1/2}.
\end{equation}
Similarly, since $(v_{1}^{0}-\bar{v}_{1}^{0})=0$ on $\partial{D}_{2}$, it follows from \eqref{nablaw_0star} and mean value theorem that for $x\in\,S_{1/(2m)}$, 
\begin{equation}\label{part2}
|(v_{1}^{0}-\bar{v}_{1}^{0})(x)|\leq\left\|\nabla(v_{1}^{0}-\bar{v}_{1}^{0})\right\|_{L^{\infty}(S_{1/m-\beta})}\left|(h_{1}-h_{2})(x')\right|\leq\,C|x'|^{m}\leq\,C\varepsilon^{1/2}.
\end{equation}
Since $\bar{v}_{1}=\bar{v}_{1}^{0}\equiv0$ on $\partial{D}_{2}$, then for $x\in\,S_{1/(2m)}$, 
\begin{align}\label{part3}
|(\bar{v}_{1}-\bar{v}_{1}^{0})(x)|
&\leq\left\|\partial_{x_{n}}(\bar{v}_{1}-\bar{v}_{1}^{0})\right\|_{L^{\infty}(S_{1/(2m)})}\left|(h_{1}-h_{2})(x')\right|\nonumber\\
&\leq\,C\max_{|x'|=\varepsilon^{1/(2m)}}\left\{\frac{1}{(h_{1}-h_{2})(x')}-\frac{1}{\varepsilon+(h_{1}-h_{2})(x')}\right\}|x'|^{m}\nonumber\\
&\leq\frac{C\varepsilon}{|x'|^{m}(\varepsilon+|x'|^{m})}|x'|^{m}\leq\,C\varepsilon^{1/2}.
\end{align}
Thus, combining \eqref{part1}, \eqref{part2} with \eqref{part3}, we have, for $x\in\,S_{1/(2m)}$, 
\begin{equation}\label{onboundaryS}
|(v_{1}-v_{1}^{0})(x)|\leq|(v_{1}-\bar{v}_{1})(x)|+|(\bar{v}_{1}-\bar{v}_{1}^{0})(x)|+|(\bar{v}_{1}^{0}-v_{1}^{0})(x)|\leq\,C\varepsilon^{1/2}.
\end{equation}

Finally, by \eqref{lem3.1_equ1}, \eqref{onboundaryD1} and \eqref{onboundaryS}, and applying the maximum principle to $(v_{1}-v_{1}^{0})$ on $\Omega\setminus{\big(D_{1}\cup{D}_{2}\cup{D}_{1}^{0}\cup\Omega_{\varepsilon^{1/(2m)}}\big)}$, we obtain
\eqref{v1-v1*}.
\end{proof}

If $\partial{D}_{1}$ and $\partial{D}_{2}$ are assumed to be fo $C^{2,\alpha}$ and satisfy \eqref{h1h24} and \eqref{h1h25}, then we have an improvement of Lemma \ref{lem_29} by interpolation.

\begin{lemma}\label{lem_49}
Assume that $v_{1}$ and $v_{1}^{0}$ are solution of \eqref{equv1} and \eqref{equv10}, respectively. If $\partial{D}_{1}^{0}$ and $\partial{D}_{2}^{0}$ are of $C^{2,\alpha}$ and satisfy \eqref{h1h24}--\eqref{h1h25}, then 
\begin{equation}\label{nabla_v1v10}
\begin{split}
&|\nabla{v}_{1}(x)|\leq\,C|x'|^{-m},~\,x\in\Omega_{R_0}\setminus\Omega_{\varepsilon^{1/(2m)}},\\
&|\nabla{v}_{1}^{0}(x)|\leq\,C|x'|^{-m},~\,x\in\Omega^{0}_{R_0}\setminus\Omega^{0}_{\varepsilon^{1/(2m)}};
\end{split}
\end{equation}
and
\begin{equation}\label{nabla_v1-v10}
|\nabla(v_{1}-v_{1}^{0})(x)|\leq\,C\varepsilon^{1/4}|x'|^{-m},\quad\mbox{in}~~\Omega^{0}_{R_0}\setminus\Omega^{0}_{\varepsilon^{1/(2m)}}.
\end{equation}
\end{lemma}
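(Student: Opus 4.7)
The uniform bounds \eqref{nabla_v1v10} are essentially immediate. Proposition \ref{prop_v1} gives $|\nabla v_1|\le|\nabla\bar v_1|+C$, and \eqref{nablau_bar} gives $|\nabla\bar v_1|\le C/(\varepsilon+(h_1-h_2)(x'))$. On the annulus $|x'|\ge\varepsilon^{1/(2m)}$, the assumption \eqref{h1h24} forces $(h_1-h_2)(x')=\lambda|x'|^m\ge\lambda\varepsilon^{1/2}\gg\varepsilon$, so $\varepsilon+(h_1-h_2)(x')\sim|x'|^m$ and, since $|x'|^{-m}\ge\varepsilon^{-1/2}$, the additive $C$ is absorbed. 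The analogous bound on $\nabla v_1^0$ follows by using \eqref{nablaw_0star} together with \eqref{nablau_starbar} in exactly the same way.

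For the sharper estimate \eqref{nabla_v1-v10}, the plan is to combine the $L^\infty$ closeness $\|v_1-v_1^0\|_{L^\infty}\le C\varepsilon^{1/2}$ from Lemma \ref{lem_29} with a second-derivative bound of order $|x'|^{-2m}$ via an interpolation inequality. First I would establish $|\nabla^2 v_1(z)|+|\nabla^2 v_1^0(z)|\le C|z'|^{-2m}$ for $\varepsilon^{1/(2m)}\le|z'|\le R_0$ by repeating the scaling used in the proof of Proposition \ref{prop_v1}: set $\delta=\delta(z')\sim|z'|^m$, dilate by $\delta^{-1}$, and observe that the rescaled domains have boundary graphs $\hat h_1,\hat h_2$ with $C^{1,1}$ norms bounded uniformly, so classical $C^{2,\alpha}$ Schauder estimates applied to the bounded harmonic rescalings $V_1,V_1^0$ give $\|\nabla^2 V_1\|_{L^\infty(Q_{1/2})}+\|\nabla^2 V_1^0\|_{L^\infty(Q_{1/2})}\le C$. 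Unwinding the scaling returns the claimed $|z'|^{-2m}$ bound at $z$.

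With these two ingredients I would then apply the elementary Gagliardo--Nirenberg interpolation
\[
\|\nabla f\|_{L^\infty(B_{r/2}(z))}\le C\Big(\|f\|_{L^\infty(B_r(z))}^{1/2}\|\nabla^2 f\|_{L^\infty(B_r(z))}^{1/2}+r^{-1}\|f\|_{L^\infty(B_r(z))}\Big)
\]
to $f=v_1-v_1^0$ on a ball of radius $r\sim|z'|^m$ around each $z$ in the annulus. With $\|f\|_\infty\le C\varepsilon^{1/2}$, $\|\nabla^2 f\|_\infty\le C|z'|^{-2m}$ and $r^{-1}\sim|z'|^{-m}$, the right-hand side is at most $C(\varepsilon^{1/4}|z'|^{-m}+\varepsilon^{1/2}|z'|^{-m})\le C\varepsilon^{1/4}|z'|^{-m}$, which is precisely \eqref{nabla_v1-v10}.

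The main obstacle is the boundary: when $z$ lies close to $\partial D_1^0$, the ball $B_r(z)$ enters the thin strip $D_1\setminus D_1^0$ where $v_1^0\equiv 1$ while $v_1$ satisfies only $|v_1-1|\le C\varepsilon^{1/2}$ (STEP 1 of Lemma \ref{lem_29}). The clean way to handle this is to replace the interior interpolation by its boundary-Schauder counterpart on the one-sided rescaled cylinder used in Proposition \ref{prop_v1}: impose the vanishing data of $v_1-v_1^0$ on the $\partial D_2$ side and the $O(\varepsilon^{1/2})$ data on the $\partial D_1^0$ side as inhomogeneous Dirichlet data, apply $W^{2,p}$/Schauder estimates on the unit scale, and then unscale. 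The $\varepsilon^{1/2}$ from the boundary data and the $\delta^{-1}\sim|z'|^{-m}$ from the dilation combine to give the stated $\varepsilon^{1/4}|z'|^{-m}$ bound via the same interpolation balance.
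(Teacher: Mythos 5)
Your proposal is correct and follows essentially the same route as the paper: rescale the annulus $|x'|\sim|z'|$ by the factor $\delta(z')\sim|z'|^m$ to a unit-size domain, obtain second-derivative bounds for the bounded harmonic rescalings by standard elliptic estimates, and interpolate with the $L^{\infty}$ bound $\|v_{1}-v_{1}^{0}\|_{L^{\infty}}\leq C\varepsilon^{1/2}$ of Lemma \ref{lem_29} to get the $\varepsilon^{1/4}|x'|^{-m}$ gradient bound (the paper performs the interpolation in the rescaled variables, you in the original ones — the same computation). Your explicit treatment of the boundary region $D_{1}\setminus D_{1}^{0}$ via one-sided boundary Schauder estimates is a point the paper leaves implicit, and is handled correctly.
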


\begin{proof}
For $\varepsilon^{1/(2m)}\leq|z'|\leq\,R_0$, we make use of the change of variable
\begin{equation*}%\label{changeofvariable2}
 \left\{
  \begin{aligned}
  &x'-z'=|z'|^{m} y',\\
  &x_n=|z'|^{m} y_n,
  \end{aligned}
 \right.
\end{equation*}
to rescale $\Omega_{|z'|+ |z'|^{m}}\setminus\Omega_{|z'|}$ into an approximate unit-size cube (or cylinder) $Q_{1}$, and $\Omega^{0}_{|z'|+|z'|^{m}}\setminus\Omega^{0}_{|z'|}$ into $Q_{1}^{0}$. Let
$$V_{1}(y)=v_{1}(z'+|z'|^{m}y',|z'|^{m}y_{n}),\quad\mbox{in}~~Q_{1},$$
and
$$V_{1}^{0}(y)=v_{1}^{0}(z'+|z'|^{m}y',|z'|^{m}y_{n}),\quad\mbox{in}~~Q_{1}^{0}.$$
Since $0<V_{1},V_{1}^{0}<1$, using the standard elliptic theory, we have
\begin{equation*}%\label{resacaled_v1}
|\nabla^{2}V_{1}|\leq\,C,\quad\mbox{in}~~Q_{1},\quad
\mbox{and}~~
|\nabla^{2}V_{1}^{0}|\leq\,C,\quad\mbox{in}~~Q_{1}^{0}.
\end{equation*}
Interpolating it with \eqref{v1-v1*} yields
$$|\nabla(V_{1}-V_{1}^{0})|\leq\,C\varepsilon^{\frac{1}{2}(1-\frac{1}{2})}\leq\,C\varepsilon^{1/4},\quad\mbox{in}~~Q_{1}^{0}.$$
Thus, rescaling it back to $v_{1}-v_{1}^{0}$, we have \eqref{nabla_v1-v10} holds. 

By the way, we have
$$|\nabla{v}_{1}(x)|\leq\,C|z'|^{-m},\quad\,x\in\Omega_{|z'|+|z'|^{m}}\setminus\Omega_{|z'|},$$
and
$$\quad|\nabla{v}_{1}^{0}(x)|\leq\,C|z'|^{-m},\quad\,x\in\Omega^{0}_{|z'|+|z'|^{m}}\setminus\Omega^{0}_{|z'|},
$$
so \eqref{nabla_v1v10} follows.
\end{proof}

\begin{proof}[Proof of Theorem \ref{thm_energy2}]
To prove \eqref{asym2}, we divid the integral into two parts:
\begin{equation}\label{energy_v1}
\int_{\Omega}|\nabla{v}_{1}|^{2}
=\int_{\Omega\setminus\Omega_{\varepsilon^{\gamma}}}|\nabla{v}_{1}|^{2}+\int_{\Omega_{\varepsilon^{\gamma}}}|\nabla{v}_{1}|^{2}
=:\mathrm{I}+\mathrm{II},
\end{equation}
where we take $\gamma=\frac{1}{4m}$, for convenience.

\noindent{\bf STEP 1.} We first prove
\begin{equation}\label{equ4.12}
\mathrm{I}=\int_{\Omega\setminus\Omega_{\varepsilon^{\gamma}}}|\nabla{v}_{1}|^{2}=\int_{\Omega^{0}\setminus\Omega_{\varepsilon^{\gamma}}^{0}}|\nabla{v}_{1}^{0}|^{2}+O\Big(E_{n}^{m}(\varepsilon)\Big),
\end{equation}
where $E_{n}^{m}(\varepsilon)$ is defined in \eqref{Enm}. We divide term $\mathrm{I}$ further as follows:
\begin{align*}
\mathrm{I}=&\int_{\Omega\setminus\Omega_{R_{0}}}|\nabla{v}_{1}|^{2}
+\int_{\Omega_{R_{0}}\setminus\Omega_{\varepsilon^{\gamma}}}|\nabla{v}_{1}|^{2}=:\mathrm{I}_{1}+\mathrm{I}_{2}.
\end{align*}

First, for term $\mathrm{I}_{1}$, we claim that
\begin{equation}\label{claim1}
\mathrm{I}_{1}=M_{1}+O\left(\varepsilon^{1/4}\right),\qquad\,M_{1}:=\int_{\Omega^{0}\setminus\Omega^{0}_{R_0}}|\nabla{v}_{1}^{0}|^{2}.
\end{equation}
Indeed, since
$$\Delta(v_{1}-v_{1}^{0})=0,\quad\mbox{in}~~\Omega\setminus{\big(D_{1}\cup{D}_{1}^{0}\cup{D}_{2}\cup\Omega_{R_{0}/2}\big)},$$
and
$$0<v_{1},v_{1}^{0}<1,\quad\mbox{in}~~\Omega\setminus{\big(D_{1}\cup{D}_{1}^{0}\cup{D}_{2}\cup\Omega_{R_{0}/2}\big)},$$
it follows that provided $\partial{D}_{1}^{0}$, $\partial{D}_{2}^{0}$ and $\partial \Omega$ are of $C^{2,\alpha}$, 
$$|\nabla^{2}(v_{1}-v_{1}^{0})|\leq|\nabla^{2}v_{1}|+|\nabla^{2}v_{1}^{0}|\leq\,C,\quad\mbox{in}~~\Omega\setminus{\big(D_{1}\cup{D}_{1}^{0}\cup{D}_{2}\cup\Omega_{R_{0}}\big)},$$
where $C$ is independent of $\varepsilon$. By using an interpolation with \eqref{v1-v1*}, we have
\begin{equation*}\label{v1-v1*outside}
|\nabla(v_{1}-v_{1}^{0})|\leq\,C
\varepsilon^{1/2(1-\frac{1}{2})}\leq\,C
\varepsilon^{1/4},\quad\mbox{in}~~\Omega\setminus{\big(D_{1}\cup{D}_{1}^{0}\cup{D}_{2}\cup\Omega_{R_{0}}\big)}.
\end{equation*}
In view of the boundedness of $|\nabla{v}_{1}|$ in $D_{1}^{0}\setminus(D_{1}\cup\Omega_{R_{0}})$ and  $D_{1}\setminus{D_{1}^{0}}$, and $|D_{1}^{0}\setminus(D_{1}\cup\Omega_{R_{0}})|$ and $|D_{1}\setminus{D_{1}^{0}}|$ are less than $C\varepsilon$, 
\begin{align*}%\label{nablav1_part1}
&\mathrm{I}_{1}-M_{1}\\
=&\int_{\Omega\setminus{\big(D_{1}\cup{D}_{1}^{0}\cup{D}_{2}\cup\Omega_{R_{0}}\big)}}(|\nabla{v}_{1}|^{2}-|\nabla{v}_{1}^{0}|^{2})+\int_{D_{1}^{0}\setminus(D_{1}\cup\Omega_{R_{0}})}|\nabla{v}_{1}|^{2}+\int_{D_{1}\setminus{D_{1}^{0}}}|\nabla{v}_{1}^{0}|^{2}\nonumber\\
=&2\int_{\Omega\setminus{\big(D_{1}\cup{D}_{1}^{0}\cup{D}_{2}\cup\Omega_{R_{0}}\big)}}\nabla{v}_{1}^{0}\nabla(v_{1}-v_{1}^{0})+\int_{\Omega\setminus{\big(D_{1}\cup{D}_{1}^{0}\cup{D}_{2}\cup\Omega_{R_{0}}\big)}}|\nabla(v_{1}-v_{1}^{0})|^{2}+O(\varepsilon)\nonumber\\
=&O\left(\varepsilon^{1/4}\right).
\end{align*}
Thus, \eqref{claim1} is proved.

For $\mathrm{I}_{2}$, we will prove that
\begin{equation}\label{claim2}
\mathrm{I}_{2}=\mathrm{I}_{2}^{(0)}+E_{n,m}(\varepsilon),\qquad\,\mathrm{I}_{2}^{(0)}
:=\int_{\Omega^{0}_{R_0}\setminus\Omega^{0}_{\varepsilon^{\gamma}}}|\nabla{v}_{1}^{0}|^{2}.
\end{equation}
Indeed, 
\begin{align}\label{equ4.15}
\mathrm{I}_{2}-\mathrm{I}_{2}^{(0)}=&\int_{(\Omega_{R_{0}}\setminus\Omega_{\varepsilon^{\gamma}})\setminus(\Omega^{0}_{R_0}\setminus\Omega^{0}_{\varepsilon^{\gamma}})}|\nabla{v}_{1}|^{2}+\int_{\Omega^{0}_{R_0}\setminus\Omega^{0}_{\varepsilon^{\gamma}}}|\nabla(v_{1}-v^{0}_{1})|^{2}\nonumber\\
&+2\int_{\Omega^{0}_{R_0}\setminus\Omega^{0}_{\varepsilon^{\gamma}}}\nabla{v}_{1}^{0}\cdot\nabla(v_{1}-v^{0}_{1}).
\end{align}
For the first term in the right hand side of \eqref{equ4.15}, because the thickness of $(\Omega_{R_{0}}\setminus\Omega_{\varepsilon^{\gamma}})\setminus(\Omega^{0}_{R_0}\setminus\Omega^{0}_{\varepsilon^{\gamma}})$ is $\varepsilon$, using Lemma \ref{lem_49},
\begin{align*}
\int_{(\Omega_{R_{0}}\setminus\Omega_{\varepsilon^{\gamma}})\setminus(\Omega^{0}_{R_0}\setminus\Omega^{0}_{\varepsilon^{\gamma}})}|\nabla{v}_{1}|^{2}
&\leq\,C\varepsilon\int_{\varepsilon^{\gamma}<|x'|<R_0}\frac{dx'}{|x'|^{2m}}\\
&\leq\,C\varepsilon^{1+(n-2m-1)\gamma}\leq\,C\varepsilon^{\frac{1}{2}+\frac{n-1}{4m}}\leq\,CE_{n,m}(\varepsilon).
\end{align*}
For the second and third terms, for any $\varepsilon^{\gamma}\leq|z'|\leq{R}_{0}$, $\gamma=\frac{1}{4m}$, if $\partial{D}_{1}^{0}$ and $\partial{D}_{2}^{0}$ are of $C^{2,\alpha}$, then
by Lemma \ref{lem_49},  
\begin{align*}
\int_{\Omega^{0}_{R_0}\setminus\Omega^{0}_{\varepsilon^{\gamma}}}|\nabla(v_{1}-v_{1}^{0})|^{2}\leq&\,C\varepsilon^{1/2}\int_{\Omega^{0}_{R_0}\setminus\Omega^{0}_{\varepsilon^{\gamma}}}|x'|^{-2m}dx'dx_{n}\\
\leq&\,C\varepsilon^{1/2}\int_{\varepsilon^{\gamma}<|x'|<R_0}\frac{dx'}{|x'|^{m}}
\leq\,C\varepsilon^{1/4}E_{n,m}(\varepsilon),
\end{align*}
and
$$\left|2\int_{\Omega^{0}_{R_0}\setminus\Omega^{0}_{\varepsilon^{\gamma}}}\nabla{v}_{1}^{0}\cdot\nabla(v_{1}-v_{1}^{0})\right|\leq\,C\varepsilon^{1/4}\int_{\varepsilon^{\gamma}<|x'|<R_0}\frac{dx'}{|x'|^{m}}\leq\,CE_{n,m}(\varepsilon).$$
Thus, \eqref{claim2} holds, so does \eqref{equ4.12}  with \eqref{claim1}.

\noindent{\bf STEP 2.} Next, we use the explicit functions $\bar{v}_{1}^{0}$ and $\bar{v}_{1}$ to approximate ${v}_{1}^{0}$ and $v_{1}$, respectively. 

Denote
$$M_{2}:=2\int_{\Omega^{0}_{R_{0}}}\nabla\bar{v}_{1}^{0}\cdot\nabla(v^{0}_{1}-\bar{v}_{1}^{0})
+\int_{\Omega^{0}_{R_{0}}}\left(|\nabla(v^{0}_{1}-\bar{v}_{1}^{0})|^{2}+|\partial_{x'}\bar{v}_{1}^{0}|^{2}\right),$$
which is a constant, depending on $R_{0}$ but not on $\varepsilon$. Using \eqref{nablau_starbar} and \eqref{nablaw_0star}, a similar argument as in Step 1 yields
\begin{align*}
\mathrm{I}_{2}^{(0)}
=&\int_{\Omega^{0}_{R_{0}}\setminus\Omega^{0}_{\varepsilon^{\gamma}}}|\nabla\bar{v}_{1}^{0}|^{2}
+2\int_{\Omega^{0}_{R_{0}}\setminus\Omega^{0}_{\varepsilon^{\gamma}}}\nabla\bar{v}_{1}^{0}\cdot\nabla(v^{0}_{1}-\bar{v}_{1}^{0})
+\int_{\Omega^{0}_{R_{0}}\setminus\Omega^{0}_{\varepsilon^{\gamma}}}|\nabla(v^{0}_{1}-\bar{v}_{1}^{0})|^{2}\\
=&\int_{\Omega^{0}_{R_{0}}\setminus\Omega^{0}_{\varepsilon^{\gamma}}}|\partial_{x_{n}}\bar{v}_{1}^{0}|^{2}
+ M_{2}+O(\varepsilon^{\frac{n-1}{4m}}).
\end{align*}

For term $\mathrm{II}$ in \eqref{energy_v1},
\begin{align}\label{nabla_v1_part3}
\mathrm{II}=\int_{\Omega_{\varepsilon^{\gamma}}}|\nabla{v}_{1}|^{2}=
&\int_{\Omega_{\varepsilon^{\gamma}}}|\nabla\bar{v}_{1}|^{2}
+2\int_{\Omega_{\varepsilon^{\gamma}}}\nabla\bar{v}_{1}\cdot\nabla(v_{1}-\bar{v}_{1})
+\int_{\Omega_{\varepsilon^{\gamma}}}|\nabla(v_{1}-\bar{v}_{1})|^{2}.
\end{align}
By Proposition \ref{prop_v1}, we have
$$2\int_{\Omega_{\varepsilon^{\gamma}}}\nabla\bar{v}_{1}\cdot\nabla(v_{1}-\bar{v}_{1})
+\int_{\Omega_{\varepsilon^{\gamma}}}|\nabla(v_{1}-\bar{v}_{1})|^{2}=O\left(\varepsilon^{\frac{n-1}{4m}}\right).
$$
Recalling the assumption \eqref{h1h24}-\eqref{h1h25} and \eqref{ubar}, we have
\begin{equation}\label{thm1.8_equ1}
\left|\partial_{x'}\bar{v}_{1}(x)\right|\leq\frac{C|x'|^{m-1}}{\varepsilon+|x'|^{m}},\quad
\partial_{x_{n}}\bar{v}_{1}(x)=\frac{1}{\varepsilon+(h_{1}-h_{2})(x')},\qquad~x\in\Omega_{\varepsilon^\gamma}.
\end{equation}
Therefore
$$\int_{\Omega_{\varepsilon^{\gamma}}}|\partial_{x'}\bar{v}_{1}|^{2} \le C \int_{|x'|<\varepsilon^\gamma} \frac{|x'|^{2m-2}}{\varepsilon + |x'|^{m}} \,dx'  \le C \int_{|x'|<\varepsilon^\gamma} \,|x'|^{m-2}dx' = O\left(\varepsilon^{\frac{n+m-3}{4m}}\right).$$
Since $m\geq2$, then $n+m-3\geq\,n-1$. Hence, it follows from \eqref{nabla_v1_part3} and $m\geq2$ that
\begin{align*}
\mathrm{II}=&\int_{\Omega_{\varepsilon^{\gamma}}}|\partial_{x_{n}}\bar{v}_{1}|^{2} + O\left(\varepsilon^{\frac{n-1}{4m}}\right).
\end{align*}

Now combining Step 1 with the above, using $\gamma=1/(4m)$, we obtain
\begin{align}\label{energy_v1_2}
\int_{\Omega}|\nabla{v}_{1}|^{2} =&\int_{\Omega_{\varepsilon^{\gamma}}}|\partial_{x_{n}}\bar{v}_{1}|^{2} +\int_{\Omega^{0}_{R_{0}}\setminus\Omega^{0}_{\varepsilon^{\gamma}}}|\partial_{x_{n}}\bar{v}_{1}^{0}|^{2}
+ M_{1}+M_{2}+O\Big(E_{n}^{m}(\varepsilon)\Big).
\end{align}
\noindent{\bf STEP 3.} Next, we will calculate the first two terms in the right hand side of \eqref{energy_v1_2}. It follows from \eqref{nablau_starbar} and \eqref{thm1.8_equ1} that 
\begin{align}\label{equ3.17}
&\int_{\Omega_{\varepsilon^{\gamma}}}|\partial_{x_{n}}\bar{v}_{1}|^{2}+\int_{\Omega^{0}_{R_{0}}\setminus\Omega^{0}_{\varepsilon^{\gamma}}}|\partial_{x_{n}}\bar{v}_{1}^{0}|^{2}\nonumber\\=&\int_{R_0>|x'|>\varepsilon^{\gamma}}\frac{dx'}{(h_{1}-h_{2})(x')}+\int_{|x'|<\varepsilon^{\gamma}}\frac{dx'}{\varepsilon+(h_{1}-h_{2})(x')}\nonumber\\
=&\int_{\varepsilon^\gamma < |x'| < R_0} \frac{dx'}{\lambda|x'|^{m}} +  \int_{|x'| < \varepsilon^\gamma} \frac{dx'}{\varepsilon + \lambda|x'|^{m}}\nonumber\\
= &\int_{|x'| <R_0} \frac{dx'}{\varepsilon + \lambda|x'|^{m}}+ O\left(\varepsilon^{\frac{1}{2}+\frac{n-1}{4m}}  \right),
\end{align}
we here used that
\begin{align*}
\left|\int_{\varepsilon^\gamma < |x'| < R_0}\Big( \frac{1}{\lambda|x'|^{m}}-\frac{1}{\varepsilon+ \lambda|x'|^{m}}\Big)  \,dx' \right|
 \le& C\varepsilon \int_{\varepsilon^\gamma < |x'| < R_0} \frac{dx'}{|x'|^{2m}}\\
 \leq&\,C\varepsilon^{1+(n-2m-1)\gamma}\leq\,C\varepsilon^{\frac{1}{2}+\frac{n-1}{4m}}.
\end{align*}

Finally, we calculate the first term in the line of \eqref{equ3.17}. 

(i) For $n=2$, we have
\begin{align*}
2 \int_0^{R_{0}}  \frac{dx_1}{\varepsilon +\lambda\,x_{1}^{m} }
= \frac{\mathcal{L}_{m,2}}{\varepsilon^{1-\frac{1}{m}}\lambda^{\frac{1}{m}}}+M_{3}^{(1)} +O\left( \varepsilon^{\frac{1}{4m}} \right),
\end{align*}
where 
$$\mathcal{L}_{m,2}:=\int_{0}^{+\infty}\frac{1}{1+y^{m}}dy,\quad\,M_{3}^{(1)}:=\frac{2}{\lambda}\frac{m-1}{R_0^{m-1}},\quad\,m\geq2.$$
Therefore, from \eqref{energy_v1_2},
$$\int_{\Omega}|\nabla{v}_{1}|^{2} = \frac{\mathcal{L}_{m,2}}{\varepsilon^{1-\frac{1}{m}}\lambda^{\frac{1}{m}}}+M+O\left(\varepsilon^{\frac{1}{4m}}\right),\quad\,M=M_{1}+M_{2}+M_{3}^{(1)}.$$

(ii) For $n\geq3$, $m=n-1$, for the first term of \eqref{equ3.17}, \begin{align}\label{n=3part2}
\begin{split}
\int_{|x'| <R_0} \frac{dx'}{\varepsilon + \lambda|x'|^{m}} =&\frac{\omega_{n-1}}{\lambda}\int_{0}^{R_{0}(\frac{\lambda}{\varepsilon})^{1/m}}\frac{r^{m-1}dr}{1+r^{m}}\\
=&\frac{\mathcal{L}_{m,n}}{\lambda|\log\varepsilon|} +M_{3}^{(2)}+ O(\varepsilon^{\frac{1}{m}}),
\end{split}
\end{align} 
where
$$\mathcal{L}_{m,n}:=\frac{\omega_{n-1}}{m},\quad\,M_{3}^{(2)}:=\frac{\omega_{n-1}}{\lambda}(\log R_{0}+\frac{1}{m}\log\lambda).$$
Therefore, from \eqref{energy_v1_2},
$$\int_{\Omega}|\nabla{v}_{1}|^{2} =\frac{\mathcal{L}_{m,n}}{\lambda|\log\varepsilon|}+M+ O(\varepsilon^{\frac{1}{n-1}})+O\Big(E_{n}^{n-1}(\varepsilon)\Big),$$
where
$$M=M_{1}+M_{2}+M_{3}^{(2)}.$$

(iii) For $n\geq3$, $m>n-1$
\begin{align*}\label{n=3part2}
\int_{|x'| <R_0} \frac{dx'}{\varepsilon + \lambda|x'|^{m}} =&\frac{\omega_{n-1}}{\lambda^{\frac{n-1}{m}}\varepsilon^{1-\frac{n-1}{m}}}\int_{0}^{R_{0}(\frac{\lambda}{\varepsilon})^{1/m}}\frac{r^{n-2}dr}{1+r^{m}}\\
=&\frac{\mathcal{L}_{m,n}}{\lambda^{\frac{n-1}{m}}\varepsilon^{1-\frac{n-1}{m}}} +M_{3}^{(3)}+ O(\varepsilon^{2-\frac{n-1}{m}}),
\end{align*} 
where
$$\mathcal{L}_{m,n}:=\omega_{n-1}\int_{0}^{+\infty}\frac{r^{n-2}dr}{1+r^{m}},\quad\,M_{3}^{(3)}:=\frac{\omega_{n-1}}{\lambda}R_{0}^{n-1-m}.$$
Therefore, from \eqref{energy_v1_2},
$$\int_{\Omega}|\nabla{v}_{1}|^{2} =\frac{\mathcal{L}_{m,n}}{\lambda^{\frac{n-1}{m}}\varepsilon^{1-\frac{n-1}{m}}}+M+ O(\varepsilon^{\frac{1}{4m}}),$$
where
$$M=M_{1}+M_{2}+M_{3}^{(3)}.$$

It is not difficult to prove that these $M$ are some constants independent of $R_{0}$. If not, suppose that there exist $M(R_{0})$ and $M(\tilde{R}_{0})$, both independent of $\varepsilon$, such that \eqref{asym2} holds, then
$$M(R_{0})-M(\tilde{R}_{0})\rightarrow0,\quad\mbox{as}~~\varepsilon\rightarrow0,$$
which implies that $M(R_{0})=M(\tilde{R}_{0})$.
\end{proof}

\bigskip

\section{The proof of Theorem \ref{thm2} and Proposition \ref{prop2}}\label{Sec_prop2}

\subsection{Estimates for $|\nabla{v}_{b}|$}
\begin{proof}[Proof of Theorem \ref{thm2}] First, by the trace theorem, we have $|C_{2}|\leq\,C$. Recall that $v_{b}$ satisfies that
\begin{equation}\label{eqv2}
\begin{cases}
\Delta({v}_{b}-C_{2})=0&\mbox{in}~\Omega,\\
v_{b}-C_{2}=0&\mbox{on}~\partial{D}_{1}\cup\partial{D}_{2},\\
v_{b}-C_{2}=\varphi(x)-C_{2}&\mbox{on}~\partial{D}.
\end{cases}
\end{equation}
For any $0<t<s<R_{0}$, we introduce a cutoff function $\eta\in{C}^{\infty}(\Omega_{R_{0}})$ satisfying $0\leq\eta\leq1$, $\eta=1$ in $\Omega_{t}(z')$, $\eta=0$ in $\Omega_{R_{0}}\setminus\Omega_{s}(z')$, and $|\nabla\eta|\leq\frac{2}{s-t}$. Multiplying $\eta^{2}(v_{b}-C_{2})$ on the both sides of the equation in \eqref{eqv2} and applying the integration by parts, we have
$$\int_{\Omega_{s}(z')}|\nabla(v_{b}-C_{2})|^{2}\eta^{2}dx\leq\frac{C}{(s-t)^{2}}\int_{\Omega_{s}(z')}|v_{b}-C_{2}|^{2}dx.$$
Since $v_{b}-C_{2}=0$ on $\partial{D}_{2}$, by H\"older inequality, we have
$$\int_{\Omega_{s}(z')}|v_{b}-C_{2}|^{2}\leq\,C\delta(z')^{2}\int_{\Omega_{s}(z')}|\nabla{v}_{b}|^{2}dx.$$
Thus, we have
\begin{equation}\label{energy}
\int_{\Omega_{t}(z')}|\nabla{v}_{b}|^{2}dx\leq\,C\left(\frac{\delta(z')}{s-t}\right)^{2}\int_{\Omega_{s}(z')}|\nabla{v}_{b}|^{2}dx
\end{equation}
For simplicity, denote
$$F(t):=\int_{\Omega_{t}(z')}|\nabla{v}_{b}|^{2}dx,$$
then \eqref{energy} can be written as
$$F(t)\leq\,\left(\frac{C_{0}\delta(z')}{s-t}\right)^{2}F(s),$$
here we fix the universal constant $C_{0}$. Let $t_{0}=\delta$, $t_{i+1}=t_{i}+2C_{0}\delta$, then we have the following iteration formula
$$F(t_{i})\leq\frac{1}{4}F(t_{i+1}).$$
After $k=\left[\frac{\delta(z')^{1/m}}{2C_{0}\delta(z')}\right]$ times, we have
$$F(t_{0})\leq(\frac{1}{4})^{k}\int_{\Omega_{|z'|}(z')}|\nabla{v}_{b}|^{2}dx\leq\,C(\frac{1}{4})^{\left[\frac{1}{2C_{0}\delta(z')^{1-1/m}}\right]}.$$
So that
$$\int_{\Omega_{\delta(z')}(z')}|\nabla{v}_{b}|^{2}dx\leq\,C\exp(-\frac{1}{2C_{0}\delta(z')^{1-1/m}}).$$
A similar procedure as Step 2 in the proof of Proposition \ref{prop_v1} yields \eqref{thm2.5_equ}.
\end{proof}

\subsection{Proof of Propostion \ref{prop2}}

We recall the decomposition as in \cite{bly1}
\begin{equation}\label{decomposition_u3}
u(x)=C_{1}v_{1}(x)+C_{2}v_{2}(x)+v_{0}(x),\quad\mbox{in}~~\Omega,
\end{equation}
where $v_{1}$ is defined in \eqref{equv1}, $v_{2}$ and $v_{0}$ are, respectively, the solutions of
\begin{equation}\label{equv2}
\begin{cases}
\Delta{v}_{2}=0&\mbox{in}~\Omega,\\
v_{2}=1&\mbox{on}~\partial{D}_{2},\\
v_{2}=0&\mbox{on}~\partial{D}_{1}\cup\partial{D},\end{cases}
\quad\mbox{and}\quad
\begin{cases}
\Delta{v}_{0}=0&\mbox{in}~\Omega,\\
v_{0}=0&\mbox{on}~\partial{D}_{1}\cup\partial{D}_{2},\\
v_{0}=\varphi(x)&\mbox{on}~\partial{D}.
\end{cases}
\end{equation}
Then $(v_{1}+v_{2})$ satisfies
\begin{equation}\label{v1+v2}
\begin{cases}
\Delta (v_{1}+v_{2})=0,&\mbox{in}~\Omega,\\
v_{1}+v_{2}=1,&\mbox{on}~\partial{D}_{1}\cup\partial{D}_{2},\\
v_{1}+v_{2}=0,&\mbox{on}~\partial{D}.
\end{cases}
\end{equation}

We decompose $u_{0}$ into
$$u_{0}=C_{0}u_{0}^{1}+u_{0}^{0},\quad\mbox{in}~\Omega^{0},$$
where $u_{0}^{1},u_{0}^{0}\in{C}^{1}(\overline{\Omega})$ are, respectively, the solutions of
\begin{equation}\label{u0}
\begin{cases}
\Delta u_{0}^{1}=0,&\mbox{in}~\Omega^{0},\\
u_{0}^{1}=1,&\mbox{on}~\partial{D}_{1}^{0}\cup\partial{D}_{2}^{0},\\
u_{0}^{1}=0,&\mbox{on}~\partial{D},
\end{cases}\quad
\mbox{and}\quad
\begin{cases}
\Delta u_{0}^{0}=0,&\mbox{in}~\Omega^{0},\\
u_{0}^{0}=0,&\mbox{on}~\partial{D}_{1}^{0}\cup\partial{D}_{2}^{0},\\
u_{0}^{0}=\varphi,&\mbox{on}~\partial{D}.
\end{cases}
\end{equation}

To prove Proposition \ref{prop2}, we need the following lemmas. 

\begin{lemma}\label{lem5.1}
\begin{equation}\label{convergence_v12}
\left|\int_{\partial{D}_{i}}\frac{\partial(v_{1}+v_{2})}{\partial\nu^{-}}-\int_{\partial{D}_{i}^{0}}\frac{\partial{u}_{0}^{1}}{\partial\nu^{-}}\right|\leq\,C\varepsilon^{1^{-}},\quad\,i=1,2,
\end{equation}
and
\begin{equation}\label{convergence_v0}
\left|\int_{\partial{D}_{i}}\frac{\partial{v}_{0}}{\partial\nu^{-}}-\int_{\partial{D}_{i}^{0}}\frac{\partial{u}_{0}^{0}}{\partial\nu^{-}}\right|\leq\,C\varepsilon^{1^{-}},\quad\,i=1,2,
\end{equation}
where $\varepsilon^{1^{-}}$ means $\varepsilon^{1-\eta}$ for any small positive constant $\eta$.
\end{lemma}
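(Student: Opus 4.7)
My plan is to first reduce both estimates to flux comparisons for harmonic functions of the type handled by Theorem \ref{thm2}. By uniqueness for the Dirichlet problem, $v_1 + v_2 = 1 - V$ on $\Omega$, where $V$ is harmonic with $V = 0$ on $\partial D_1 \cup \partial D_2$ and $V = 1$ on $\partial D$; likewise $u_0^1 = 1 - V^0$ on $\Omega^0$ for the obvious analogue of $V$. The functions $V, V^0$ fit the hypotheses of Theorem \ref{thm2} (constant data $C_2 = 0$ on the inclusions and $\varphi \equiv 1$), so their gradients satisfy $|\nabla V(x)| + |\nabla V^0(x)| \leq C e^{-A/(\varepsilon + |x'|^m)^{1-1/m}}$ in the channel $\Omega_{R_0}$, and are uniformly bounded elsewhere by standard elliptic estimates. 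The identity $\partial(v_1+v_2)/\partial\nu^- = -\partial V/\partial\nu^-$ reduces \eqref{convergence_v12} to bounding $\int_{\partial D_i}\partial V/\partial\nu^- - \int_{\partial D_i^0}\partial V^0/\partial\nu^-$, while \eqref{convergence_v0} is handled identically with $v_0$ and $u_0^0$ directly playing the role of $V$ and $V^0$.

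Next I would show $\|V - V^0\|_{L^\infty(\Omega \cap \Omega^0)} \leq C\varepsilon$ via the maximum principle. Both functions are harmonic in the common region $\Omega^* := \Omega \cap \Omega^0 = D \setminus (D_1 \cup D_1^0 \cup D_2)$ and coincide on $\partial D \cup \partial D_2$. On the two thin crescent pieces $\partial D_1 \setminus D_1^0$ and $\partial D_1^0 \setminus D_1$, which lie within $O(\varepsilon)$ of the vertical shift between $\partial D_1$ and $\partial D_1^0$, the function that is not forced to vanish by its own boundary condition can be Taylor-expanded from its zero set, and the discrepancy $|V|$ or $|V^0|$ is bounded by $\varepsilon\cdot\sup|\nabla V^0|$ or $\varepsilon\cdot\sup|\nabla V|$, which is $O(\varepsilon)$ by the uniform gradient bound just established.

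To transfer this $L^\infty$ closeness to the fluxes, I would fix $\eta \in (0,1)$, set $r = \varepsilon^\eta$, and introduce a smooth control surface $\Sigma_r \subset \Omega^*$ that separates $D_1 \cup D_1^0$ from $D_2$, sits at distance $\sim r$ from $\partial(D_1 \cup D_1^0) \cup \partial D_2$ outside the ball $B_r(0)$, and closes off through $\partial B_r(0) \cap \Omega^*$ inside it. Flux conservation for the harmonic functions $V$ and $V^0$ identifies $\int_{\partial D_1}\partial V/\partial\nu^-$ and $\int_{\partial D_1^0}\partial V^0/\partial\nu^-$ with integrals of $\nabla V\cdot\nu_{\Sigma_r}$ and $\nabla V^0\cdot\nu_{\Sigma_r}$ over this composite surface. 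Subtracting, interior Schauder estimates applied to the harmonic function $V - V^0$ at distance $r$ from $\partial\Omega^*$ give $|\nabla(V-V^0)| \leq C\varepsilon/r = C\varepsilon^{1-\eta}$ on $\Sigma_r$, and the exponential bound of Theorem \ref{thm2} controls $|\nabla V|+|\nabla V^0|$ on $\partial B_r(0) \cap \Omega^*$ by $Ce^{-A/r^{m-1}}$, which is super-polynomially small in $\varepsilon$. Integrating over a composite surface of fixed $(n-1)$-measure yields the desired $\varepsilon^{1^-}$ rate, $\eta$ being arbitrarily small.

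The main obstacle is geometric: because $\Omega^*$ pinches to zero width at the origin, no smooth closed surface enclosing $D_1 \cup D_1^0$ but avoiding $D_2$ can be maintained at uniform positive distance from $\partial\Omega^*$. The device of detouring $\Sigma_r$ through $\partial B_r(0)$ is precisely what Theorem \ref{thm2} is calibrated for: the exponential decay absorbs the singularity at the pinch point, while the annular part of $\Sigma_r$ at distance $\sim r$ from the inclusion boundaries inherits the $\varepsilon^{1-\eta}$ gradient closeness from the $L^\infty$ estimate via standard harmonic regularity.
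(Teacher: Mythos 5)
Your overall strategy is the same as the paper's: exponentially decaying gradient bounds for $v_1+v_2$ and $u_0^1$ in the channel from Theorem \ref{thm2}, a maximum-principle argument on the two crescents giving $\|(v_1+v_2)-u_0^1\|_{L^\infty}\le C\varepsilon$, and then a flux comparison across an intermediate surface split into a near-origin piece (killed by the exponential decay) and a far piece (handled by interior gradient estimates for the bounded harmonic difference). The paper takes the hyperplane $\gamma=\{x_n=0\}\cap\Omega$ together with $(\partial D)^{+}$ as that surface; your $\Sigma_r$ is a variant of the same device, and the reduction of \eqref{convergence_v0} to the same scheme is fine.

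The gap is in the geometry of $\Sigma_r$. Any surface separating $D_1\cup D_1^0$ from $D_2$ must thread the narrow channel at every horizontal radius $|x'|$, where the total width of $\Omega^*$ is $\varepsilon+(h_1-h_2)(x')\approx |x'|^m$. Hence for $r<|x'|\lesssim r^{1/m}$ the surface is necessarily within distance $\sim|x'|^m\ll r$ of $\partial\Omega^*$, so the claim that $\Sigma_r\setminus B_r(0)$ sits at distance $\sim r$ from the boundary, and with it the uniform bound $|\nabla(V-V^0)|\le C\varepsilon/r$ on all of $\Sigma_r\setminus B_r(0)$, fails on this transitional annulus; it is not covered by your $\partial B_r(0)$ piece either. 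This is precisely the region the paper's $\gamma_2$ treats: there one only gets $|\nabla\phi_1(y',0)|\le C\varepsilon/|y'|^m$ from a ball of radius $\sim|y'|^m$, and integrating this over $\gamma_2$ produces the factors $\varepsilon|\log\varepsilon|^{m-n+1}$, $\varepsilon\log|\log\varepsilon|$, $\varepsilon|\log\varepsilon|^{m-1}$ — which is the actual source of the rate $\varepsilon^{1^-}$ rather than $\varepsilon$. The patch within your framework is straightforward: either carry out the paper's local-scale integration on the transitional annulus, or note that the exponential bound $\exp(-A/(\varepsilon+|x'|^m)^{1-1/m})$ remains super-polynomially small for all $|x'|\le\varepsilon^{\eta'}$ with any fixed $\eta'>0$ (since $m\ge2$), so the detour region may be enlarged to $\{|x'|<\varepsilon^{\eta'}\}$, outside of which the channel width is $\gtrsim\varepsilon^{m\eta'}$ and the interior estimate yields $C\varepsilon^{1-m\eta'}=\varepsilon^{1^-}$. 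As written, however, the middle region is unaccounted for.
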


\begin{proof} We only prove \eqref{convergence_v12} with $i=1$ for instance, the others are the same.
It follows from Theorem \ref{thm2} with $\varphi(x)=C_{2}=1$ that
\begin{equation}\label{nabla(v_{1}+v_{2})}
|\nabla(v_{1}+v_{2})|\leq\,C,\quad\mbox{in}~~\Omega.
\end{equation}
Because of the same reason,
\begin{equation}\label{nabla{v}^{*1}}
|\nabla{u}_{0}^{1}|\leq\,C,\quad\mbox{in}~~\Omega^{0}.
\end{equation}

Letting
$$\phi_{1}:=(v_{1}+v_{2})-u^{1}_{0},$$
then $\Delta\phi_{1}=0$ in $V=D\setminus\overline{D_{1}\cup{D}_{1}^{0}\cup{D}_{2}}$, and $\phi_{1}=0$ on $\partial{D}$. It is obvious that  $(v_{1}+v_{2})=u_{0}^{1}=1$ on $\partial{D}_{2}$, that is, $\phi_{1}=0$  on $\partial{D}_{2}$. 
On $\partial{D}_{1}^{0}\setminus{D}_{1}$, by using mean value theorem and \eqref{nabla(v_{1}+v_{2})}, we have
\begin{align*}
|\phi_{1}|\Big|_{\partial{D}_{1}^{0}\setminus{D}_{1}}&=|(v_{1}+v_{2})-{u}_{0}^{1}|\Big|_{\partial{D}_{1}^{0}\setminus{D}_{1}}
=|(v_{1}+v_{2})-1|\Big|_{\partial{D}_{1}^{0}\setminus{D}_{1}}\\
&=|(v_{1}+v_{2})(x',x_{d})-(v_{1}+v_{2})(x',x_{d}+\varepsilon)|\Big|_{\partial{D}_{1}^{0}\setminus{D}_{1}}\\
&\leq|\nabla (v_{1}+v_{2})(\xi)|\varepsilon\leq\,C\varepsilon,
\end{align*}
for some $\xi\in\Omega$; similarly, using \eqref{nabla{v}^{*1}},
\begin{align*}
|\phi_{1}|\Big|_{\partial{D}_{1}\setminus{D}_{1}^{0}}&=|(v_{1}+v_{2})-{u}_{0}^{1}|\Big|_{\partial{D}_{1}\setminus{D}_{1}^{0}}=|1-u_{0}^{1}|\Big|_{\partial{D}_{1}\setminus{D}_{1}^{0}}\\
&=|{u}_{0}^{1}(x',x_{d}-\varepsilon)-{u}_{0}^{1}(x',x_{d})|\Big|_{\partial{D}_{1}\setminus{D}_{1}^{0}}=|\nabla {u}_{0}^{1}(\xi)|\varepsilon\leq\,C\varepsilon,
\end{align*}
for some another $\xi\in\Omega^{0}$. Applying the maximum principle to $\phi_{1}$ on $V$, we have
\begin{equation}\label{phi1_bdd}
|\phi_{1}|\leq\,C\varepsilon,\quad\mbox{on}~~V.
\end{equation}

Denote
$$\Omega^{+}:=V\cap\{x\in\Omega|x_{d}>0\},\quad~~(\partial{D})^{+}:=\{x\in\partial{D}|x_{d}>0\},~~\mbox{and}~~\gamma=\{x_{d}=0\}\cap\Omega.$$
Since $(v_{1}+v_{2})$ and $u_{0}^{1}$ are harmonic in $\Omega^{+}\setminus{D}_{1}$ and $\Omega^{+}\setminus{D}_{1}^{0}$, respectively, by using integration by parts, 
$$0=\int_{\Omega^{+}\setminus{D}_{1}}\Delta(v_{1}+v_{2})=\int_{\partial{D}_{1}}\frac{\partial(v_{1}+v_{2})}{\partial\nu^{-}}+\int_{(\partial{D})^{+}}\frac{\partial(v_{1}+v_{2})}{\partial\nu}+\int_{\gamma}\frac{\partial(v_{1}+v_{2})}{\partial\nu},$$
and
$$0=\int_{\Omega^{+}\setminus{D}_{1}^{0}}\Delta{u}_{0}^{1}=\int_{\partial{D}_{1}^{0}}\frac{\partial{u}_{0}^{1}}{\partial\nu^{-}}+\int_{(\partial{D})^{+}}\frac{\partial{u}_{0}^{1}}{\partial\nu}+\int_{\gamma}\frac{\partial{u}_{0}^{1}}{\partial\nu}.$$
Thus,
$$\int_{\partial{D}_{1}^{0}}\frac{\partial{u}_{0}^{1}}{\partial\nu^{-}}-\int_{\partial{D}_{1}}\frac{\partial(v_{1}+v_{2})}{\partial\nu^{-}}=\int_{(\partial{D})^{+}}\frac{\partial\phi_{1}}{\partial\nu}+\int_{\gamma}\frac{\partial\phi_{1}}{\partial\nu}.$$

First, using the standard boundary gradient estimates for $\phi_{1}$ and \eqref{phi1_bdd}, we have
$$\Big|\int_{(\partial{D})^{+}}\frac{\partial\phi_{1}}{\partial\nu}\Big|\leq\,C\varepsilon.$$
Divide $\gamma$ into three pieces: $\gamma=\gamma_{1}\cup\gamma_{2}\cup\gamma_{3}$, where
$$\gamma_{1}:=\{(x',0)~|~|x'|\leq\,\frac{A}{2|\log\varepsilon|}\},\quad\gamma_{2}:=\{(x',0)~|~\frac{A}{2|\log\varepsilon|}<|x'|<R_{0}\},$$
$$\gamma_{3}:=\gamma\setminus(\gamma_{1}\cup\gamma_{2}),$$
the constant $A$ is determined in \eqref{thm2.5_equ}. Write
$$\int_{\gamma}\frac{\partial\phi_{1}}{\partial\nu}=\int_{\gamma_{1}}+\int_{\gamma_{2}}+\int_{\gamma_{3}}\frac{\partial\phi_{1}}{\partial\nu}:=\mathrm{I}+\mathrm{II}+\mathrm{III}.$$
For $(y',0)\in\gamma_{1}$, 
by Theorem \ref{thm2}, 
$$|\nabla (v_{1}+v_{2})|,|\nabla u_{0}^{1}|\leq\,C\exp(-\frac{A}{(\varepsilon+|x'|^{m})^{1-\frac{1}{m}}}),\quad\mbox {in}~\Omega_{R_{0}}.$$ Hence
$$|\mathrm{I}|\leq\,C\varepsilon.$$
For $(y',0)\in\gamma_{2}$, there exists a $r>\frac{1}{C}|y'|^{m}$ for some $C>1$ such that $B_{r}(y',0)\subset{V}$. It then follows from the standard gradient estimates for harmonic function and \eqref{phi1_bdd} that
$$|\nabla\phi_{1}(y',0)|\leq\frac{C\varepsilon}{|y'|^{m}},$$
and
$$|\mathrm{II}|\leq\,C\varepsilon\int_{\frac{A}{2|\log\varepsilon|}<|y'|<R_{0}}\frac{1}{|y'|^{m}}dS\leq\,C
\begin{cases}
\varepsilon|\log\varepsilon|^{m-n+1}&\mbox{for}~m>n-1,n\geq3,\\
\varepsilon\log|\log\varepsilon|&\mbox{for}~m=n-1, n\geq3,\\
\varepsilon|\log\varepsilon|^{m-1}&\mbox{for}~m\geq2,n=2.
\end{cases}$$
For $(y',0)\in\gamma_{3}$, there is a universal constant $r>0$ such that $B_{r}(x)\subset{V}$ for all $x\in\gamma_{3}$. So we have from \eqref{phi_bdd} that for any $x\in\gamma_{3}$,
$$|\nabla\phi_{1}|\leq\frac{C\varepsilon}{r}\leq\,C\varepsilon,$$
and
$$|\mathrm{III}|\leq\,C\varepsilon.$$
Thus, we have \eqref{convergence_v12} with $i=1$. 
\end{proof}

\begin{lemma}\label{lem6.2}
 Let $C_{1}$ and $C_{2}$ be defined in \eqref{equinfty2} and $C_{0}$ be in \eqref{equ_u0}. We have
\begin{equation}\label{C*}
\left|\frac{C_{1}+C_{2}}{2}-C_{0}\right|\leq\,C\Big(\rho^{m}_{n}(\varepsilon)\Big).
\end{equation}
As a consequence, combining it with \eqref{C1-C2_2}, we have
\begin{equation}\label{C*-C2}
\left|C_{i}-C_{0}\right|\leq\left|C_{i}-\frac{C_{1}+C_{2}}{2}\right|+\left|\frac{C_{1}+C_{2}}{2}-C_{0}\right|\leq\,C\Big(\rho^{m}_{n}(\varepsilon)\Big),\quad\,i=1,2.
\end{equation}
\end{lemma}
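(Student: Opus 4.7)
The plan is to combine the two flux conditions $\int_{\partial D_i}\partial u/\partial\nu^-=0$ in a symmetric way that isolates $\bar C := (C_1+C_2)/2$ from the singular difference $\Delta := C_1-C_2$, and then compare with the single flux condition satisfied by the limit $u_0$. Using the decomposition \eqref{decomposition_u3} and the Green's identity $\int_{\partial D_i}\partial v_j/\partial\nu^-=\int_\Omega \nabla v_i\cdot\nabla v_j$, the two conditions become $C_1a_{i1}+C_2a_{i2}+b_i=0$ with $a_{ij}=\int_\Omega\nabla v_i\cdot\nabla v_j$ (in particular $a_{12}=a_{21}$) and $b_i=\int_\Omega\nabla v_i\cdot\nabla v_0$. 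Summing the two equations and writing $C_1=\bar C+\Delta/2$, $C_2=\bar C-\Delta/2$ yields
$$\bar C\int_\Omega|\nabla(v_1+v_2)|^2+\tfrac{\Delta}{2}(a_{11}-a_{22})+(b_1+b_2)=0.$$

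The key observation is that the cross coefficient $a_{11}-a_{22}$ is bounded independently of $\varepsilon$, even though $a_{11}$ and $a_{22}$ individually blow up like $\rho_n^m(\varepsilon)^{-1}$. A single integration by parts gives
$$a_{11}-a_{22}=\int_\Omega\nabla(v_1+v_2)\cdot\nabla(v_1-v_2)=\int_{\partial D_1}\tfrac{\partial(v_1+v_2)}{\partial\nu^-}-\int_{\partial D_2}\tfrac{\partial(v_1+v_2)}{\partial\nu^-},$$
and Lemma \ref{lem5.1} shows each flux on the right differs from $\int_{\partial D_i^0}\partial u_0^1/\partial\nu^-$ by $O(\varepsilon^{1^-})$, with finite limits. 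Combined with \eqref{C1-C2_2}, this gives $|\tfrac{\Delta}{2}(a_{11}-a_{22})|\le C\rho_n^m(\varepsilon)$.

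Applying Lemma \ref{lem5.1} again to the sums $\sum_i\int_{\partial D_i}\partial(v_1+v_2)/\partial\nu^-$ and $\sum_i\int_{\partial D_i}\partial v_0/\partial\nu^-$, together with the Green identities $\int_\Omega|\nabla(v_1+v_2)|^2=\sum_i\int_{\partial D_i}\partial(v_1+v_2)/\partial\nu^-$ and $b_1+b_2=\sum_i\int_{\partial D_i}\partial v_0/\partial\nu^-$, we obtain
$$\int_\Omega|\nabla(v_1+v_2)|^2=\Lambda+O(\varepsilon^{1^-}),\qquad b_1+b_2=\Gamma+O(\varepsilon^{1^-}),$$
where $\Lambda:=\int_{\Omega^0}|\nabla u_0^1|^2$ and $\Gamma:=\int_{\Omega^0}\nabla u_0^1\cdot\nabla u_0^0$. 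The third line of \eqref{equ_u0}, via the decomposition $u_0=C_0u_0^1+u_0^0$, reads $C_0\Lambda+\Gamma=0$. Substituting into the symmetric equation and using $|\bar C|\le C$ from \eqref{C1C2} along with $\varepsilon^{1^-}\ll\rho_n^m(\varepsilon)$, one concludes
$$(\bar C-C_0)\Lambda=O\bigl(\rho_n^m(\varepsilon)\bigr).$$
Since $u_0^1$ is a nonconstant harmonic function, $\Lambda$ is a positive constant independent of $\varepsilon$, which gives \eqref{C*}. The estimate \eqref{C*-C2} then follows from the triangle inequality and \eqref{C1-C2_2}.

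The main obstacle is precisely the cross term $\tfrac{\Delta}{2}(a_{11}-a_{22})$. A naive Cauchy--Schwarz estimate writing it as $|\Delta|\cdot\|\nabla(v_1+v_2)\|_{L^2}\|\nabla(v_1-v_2)\|_{L^2}$ only produces $O(\rho_n^m(\varepsilon)^{1/2})$, since $\|\nabla(v_1-v_2)\|_{L^2}^2\sim\rho_n^m(\varepsilon)^{-1}$; this is insufficient. The trick of rewriting the cross term as a difference of boundary fluxes of the \emph{regular} combination $v_1+v_2$ and then invoking Lemma \ref{lem5.1} is what produces the sharper $O(1)$ bound needed to close the argument.
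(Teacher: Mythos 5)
Your proof is correct and follows essentially the same route as the paper's: the same symmetric combination of the two flux conditions that isolates $\frac{C_1+C_2}{2}$ against the $O(\rho_n^m(\varepsilon))$ difference $C_1-C_2$ from \eqref{C1-C2_2}, the same appeal to Lemma \ref{lem5.1}, and the same comparison with the limit flux condition from \eqref{equ_u0}. The only (cosmetic) difference is in bounding the cross coefficient: you rewrite $a_{11}-a_{22}$ as $\int_{\partial D_1}\frac{\partial(v_1+v_2)}{\partial\nu^-}-\int_{\partial D_2}\frac{\partial(v_1+v_2)}{\partial\nu^-}$ and invoke Lemma \ref{lem5.1}, whereas the paper bounds $\int_{\Omega}|\nabla v_1|^2-\int_{\Omega}|\nabla v_2|^2$ directly via the cancellation $|\nabla\bar v_1|=|\nabla\bar v_2|$ together with Proposition \ref{prop_v1}; both yield the needed $O(1)$ bound.
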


\begin{proof}
In view of the decomposition \eqref{decomposition_u3}, the third line of \eqref{equinfty2}, we have
\begin{align}\label{system_C1C2}
C_{1}\int_{\partial{D}_{i}}\frac{\partial{v}_{1}}{\partial\nu^{-}}
+C_{2}\int_{\partial{D}_{i}}\frac{\partial{v}_{2}}{\partial\nu^{-}}+\int_{\partial{D}_{i}}\frac{\partial{v}_{0}}{\partial\nu^{-}}=0,\quad\,i=1,2.
\end{align}
Let
$$a_{ij}=\int_{\partial{D}_{i}}\frac{\partial{v}_{j}}{\partial\nu^{-}},\quad\,b_{i}=-\int_{\partial{D}_{i}}\frac{\partial{v}_{0}}{\partial\nu^{-}}.$$
That is,
$$\begin{cases}
a_{11}C_{1}+a_{12}C_{2}=b_{1},\\
a_{21}C_{1}+a_{22}C_{2}=b_{2}.
\end{cases}$$
So that
$$(a_{11}+a_{21})C_{1}+(a_{12}+a_{22})C_{2}=b_{1}+b_{2}.$$
Since $a_{12}=a_{21}$, it follows that
$$(a_{11}+a_{21})(C_{1}+C_{2})+(a_{22}-a_{11})C_{2}=b_{1}+b_{2}.$$
Similarly,
$$(a_{12}+a_{22})(C_{1}+C_{2})-(a_{22}-a_{11})C_{1}=b_{1}+b_{2}.$$
Adding these two equations together and dividing by two yields
\begin{equation*}
(a_{11}+a_{21}+a_{12}+a_{22})\frac{C_{1}+C_{2}}{2}+(a_{22}-a_{11})\frac{(C_{2}-C_{1})}{2}=b_{1}+b_{2}.
\end{equation*}
That is,
\begin{align}\label{C1+C2/2}
&\left(\int_{\partial{D}_{1}}\frac{\partial(v_{1}+v_{2})}{\partial\nu^{-}}+\int_{\partial{D}_{2}}\frac{\partial(v_{1}+v_{2})}{\partial\nu^{-}}\right)\frac{C_{1}+C_{2}}{2}\nonumber\\
=&\left(-\int_{\partial{D}_{1}}\frac{\partial{v}_{0}}{\partial\nu^{-}}-\int_{\partial{D}_{2}}\frac{\partial{v}_{0}}{\partial\nu^{-}}\right)+\left(\int_{\Omega}|\nabla{v}_{1}|^{2}-\int_{\Omega}|\nabla{v}_{2}|^{2}\right)\frac{(C_{2}-C_{1})}{2}.
\end{align}

Recalling that $\bar{v}_{2}=1-\bar{v}_{1}$ in $\Omega_{R}$, we have $|\nabla\bar{v}_{1}|=|\nabla\bar{v}_{2}|$. By \eqref{nabla_w_in} and \eqref{nabla_v1_in},
\begin{align*}
&\left|\int_{\Omega_{R}}|\nabla{v}_{1}|^{2}-\int_{\Omega_{R}}|\nabla{v}_{2}|^{2}\right|\\
=&\left|\int_{\Omega_{R}}|\nabla({v}_{1}-\bar{v}_{1})+\nabla\bar{v}_{1}|^{2}-\int_{\Omega_{R}}|\nabla({v}_{2}-\bar{v}_{2})+\nabla\bar{v}_{2}|^{2}\right|\\
\leq&\sum_{i=1}^{2}\left|\int_{\Omega_{R}}|\nabla({v}_{i}-\bar{v}_{i})|^{2}+2\nabla\bar{v}_{i}\nabla({v}_{i}-\bar{v}_{i})\right|\leq\,C.
\end{align*}
Hence,
\begin{align*}
&\left|\int_{\Omega}|\nabla{v}_{2}|^{2}-\int_{\Omega}|\nabla{v}_{1}|^{2}\right|\leq\,C.
\end{align*}
By using Lemma \ref{lem5.1} and \eqref{C1-C2_2}, \eqref{C1+C2/2} can be written as
\begin{align}\label{lem5.2_equ1}
&\left(\int_{\partial{D}_{1}^{0}}\frac{\partial{u}_{0}^{1}}{\partial\nu^{-}}+\int_{\partial{D}_{2}^{0}}\frac{\partial{u}_{0}^{1}}{\partial\nu^{-}}+O(\varepsilon^{1^{-}})\right)\frac{C_{1}+C_{2}}{2}\nonumber\\
=&-\int_{\partial{D}_{1}^{0}}\frac{\partial{u}_{0}^{0}}{\partial\nu^{-}}-\int_{\partial{D}_{2}^{0}}\frac{\partial{u}_{0}^{0}}{\partial\nu^{-}}+O(\varepsilon^{1^{-}})+O(\rho^{m}_{n}(\varepsilon)).
\end{align}

On the other hand, from the third line of \eqref{equ_u0}, we have
\begin{equation}\label{equ_C*}
C_{0}\left(\int_{\partial{D}_{1}^{0}}\frac{\partial{u}_{0}^{1}}{\partial\nu^{-}}+\int_{\partial{D}_{2}^{0}}\frac{\partial{u}_{0}^{1}}{\partial\nu^{-}}\right)+\left(\int_{\partial{D}_{1}^{0}}\frac{\partial{u}_{0}^{0}}{\partial\nu^{-}}+\int_{\partial{D}_{2}^{0}}\frac{\partial{u}_{0}^{0}}{\partial\nu^{-}}\right)
=0.
\end{equation}
Comparing it with \eqref{lem5.2_equ1}, and in view of $|C_{i}|\leq\,C$, we have
$$\left(\int_{\partial{D}_{1}^{0}}\frac{\partial{u}_{0}^{1}}{\partial\nu^{-}}+\int_{\partial{D}_{2}^{0}}\frac{\partial{u}_{0}^{1}}{\partial\nu^{-}}\right)\Big(\frac{C_{1}+C_{2}}{2}-C_{0}\Big)=O\Big(\rho^{m}_{n}(\varepsilon)\Big)$$
Using the integration by parts and recalling the definition of $u_{0}^{1}$, we have
$$0<\int_{\partial{D}_{1}^{0}}\frac{\partial{u}_{0}^{1}}{\partial\nu^{-}}+\int_{\partial{D}_{2}^{0}}\frac{\partial{u}_{0}^{1}}{\partial\nu^{-}}=\int_{\Omega^{0}}|\nabla {u}_{0}^{1}|^{2}\leq\,C.$$
The proof of \eqref{C*} is finished.
\end{proof}

\begin{proof}[Proof of Proposition \ref{prop2}] Let
$$\phi(x):=C_{2}-C_{0}-({v}_{b}(x)-u_{0}(x)),$$
then $\Delta\phi=0$ in $V=D\setminus\overline{D_{1}\cup{D}_{1}^{0}\cup{D}_{2}}$. It is easy to see that $\phi=0$ on $\partial{D}_{2}$ and from Lemma \ref{lem6.2}
$$|\phi|\Big|_{\partial{D}}=|C_{2}-C_{0}|\leq\,C\rho^{m}_{n}(\varepsilon).$$
On $\partial{D}_{1}^{0}\setminus{D}_{1}$, by mean value theorem, \eqref{v2_bdd} and \eqref{C*-C2}, we have
\begin{align}\label{equ4.18}
|\phi(x)|\Big|_{\partial{D}_{1}^{0}\setminus{D}_{1}}=|C_{2}-v_{b}(x)|\Big|_{\partial{D}_{1}^{0}\setminus{D}_{1}}&=|\partial_{x_{n}} v_{b}(x',\xi_{n})|\varepsilon
\leq\,C\varepsilon,
\end{align}
where $\xi_{n}\in(0,\varepsilon)$. Similarly,
\begin{align*}
|\phi|\Big|_{\partial{D}_{1}\setminus{D}_{1}^{0}}=|C_{0}-u_{0}|\Big|_{\partial{D}_{1}\setminus{D}_{1}^{0}}=|\nabla u_{0}(\xi)|\varepsilon\leq\,C\varepsilon,
\end{align*}
for some $\xi\in{D}_{1}\setminus{D}_{1}^{0}$. We now apply the maximum principle to $\phi$ on $V$, 
\begin{equation}\label{phi_bdd}
|\phi|\leq\,C\rho^{m}_{n}(\varepsilon),\quad\mbox{on}~~V.
\end{equation}

Similarly as in the proof of Lemma \ref{lem5.1}, since ${v}_{b}$ and $u_{0}$ are harmonic in $\Omega^{+}\setminus{D}_{1}$ and $\Omega^{+}\setminus{D}_{1}^{0}$, respectively, by using integration by parts, we have
$$\mathcal{B}_{\varepsilon}[\varphi]=-\int_{\partial{D}_{1}}\frac{\partial{v}_{b}}{\partial\nu^{-}}=\int_{(\partial{D})^{+}}\frac{\partial{v}_{b}}{\partial\nu}+\int_{\gamma}\partial_{{x}_{n}}{v}_{b},$$
and
$$\mathcal{B}_{0}[\varphi]=-\int_{\partial{D}_{1}^{0}}\frac{\partial{u}_{0}}{\partial\nu^{-}}=\int_{(\partial{D})^{+}}\frac{\partial{u}_{0}}{\partial\nu}+\int_{\gamma}\partial_{{x}_{n}}{u}_{0}.$$
Thus,
$$\mathcal{B}_{0}[\varphi]-\mathcal{B}_{\varepsilon}[\varphi]=\int_{\partial{D}_{1}^{0}}\frac{\partial{u}_{0}}{\partial\nu^{-}}-\int_{\partial{D}_{1}}\frac{\partial{v}_{b}}{\partial\nu^{-}}=\int_{(\partial{D})^{+}}\frac{\partial\phi}{\partial\nu}+\int_{\gamma}\partial_{{x}_{n}}\phi.$$

First, as before, using the standard boundary gradient estimates for $\phi$ and \eqref{phi_bdd}, we have
$$\Big|\int_{(\partial{D})^{+}}\frac{\partial\phi}{\partial\nu}\Big|\leq\,C\rho^{m}_{n}(\varepsilon).$$
Next, similarly in the proof of Lemma \ref{lem5.1}, we divide $\gamma$ into three pieces: $\gamma=\gamma_{1}\cup\gamma_{2}\cup\gamma_{3}$,  with a minor modification, where
$$\gamma_{1}:=\{(x',0)~|~|x'|\leq\,\varepsilon^{\frac{n-1}{m(m-n+1)}}\},\quad\gamma_{2}:=\{(x',0)~|~\varepsilon^{\frac{n-1}{m(m-n+1)}}<|x'|<R_{0}\},$$
$$\gamma_{3}:=\gamma\setminus(\gamma_{1}\cup\gamma_{2}).$$
Write
$$\int_{\gamma}\partial_{{x}_{n}}\phi=\int_{\gamma_{1}}+\int_{\gamma_{2}}+\int_{\gamma_{3}}\partial_{{x}_{n}}\phi:=\mathrm{I}+\mathrm{II}+\mathrm{III}.$$
As in the proof of Lemma \ref{lem5.1}, replacing \eqref{phi1_bdd} by \eqref{phi_bdd}, it is easy to see that
$$|\mathrm{III}|\leq\,C\rho^{m}_{n}(\varepsilon).$$

Now consider term $\mathrm{II}$. On $\overline{\Omega}_{R_{0}^{0}}$, since $\phi=0$ on $\partial{D}_{2}$ and $\phi=\varepsilon\partial_{x_{n}}v_{b}(x',\xi_{n})$ on $\partial{D}_{1}^{0}\setminus{D}_{1}$ from \eqref{equ4.18}, then we choose $\bar\phi=\varepsilon\partial_{x_{n}}v_{b}(x',\xi_{n})\bar{v}_{1}^{0}$ to approximate $\phi$ in $\Omega_{R_{0}^{0}}$, where $\bar{v}_{1}^{0}$ is defined in \eqref{def_barv10}. Thus, $\phi-\bar\phi=0$ on $\partial\Omega_{R_{0}}^{0}\setminus\Omega$. Let $w_{\phi}=\phi-\bar\phi$. Since $\|v_{b}\|_{C^{3}}\leq\,C$ by theoerem 1.1 in \cite{llby}, it follows from the proof of Proposition \ref{prop_v1}, we have  $\|\nabla w_{\phi}\|\leq\,C\varepsilon$. From the definition of $\bar{v}_{1}^{0}$, \eqref{def_barv10}, and \eqref{nablau_starbar}, we have, for $(y',0)\in\gamma_{2}$, 
$$|\partial_{{x}_{n}}\bar{v}_{0}^{1}(y',0)|\leq\frac{C}{|y'|^{m}},\quad\mbox{and}\quad|\partial_{{x}_{n}}\bar\phi(y',0)|\leq\frac{C\varepsilon}{|y'|^{m}}+C\varepsilon.$$
Hence
$$\Big|\int_{\gamma_{2}}\partial_{{x}_{n}}\bar\phi\Big|\leq\,\int_{\gamma_{2}}\frac{C\varepsilon}{|y'|^{m}}+C\varepsilon\leq\,C\rho^{m}_{n}(\varepsilon).$$
Together with $\Big|\int_{\gamma_{2}}\partial_{{x}_{n}}w_{\phi}\Big|\leq\,C\varepsilon$, yields  
$$|\mathrm{II}|\leq\,C\rho^{m}_{n}(\varepsilon).$$

For $(y',0)\in\gamma_{1}$, 
by Theorem \ref{thm2}, 
$$|\nabla v_{b}|,|\nabla u_{0}|\leq\,C\exp(-\frac{A}{(\varepsilon+|x'|^{m})^{1-\frac{1}{m}}}),\quad\mbox {in}~\Omega_{R}^{0}.$$ Hence
$$|\mathrm{I}|\leq\,C\varepsilon.$$
Thus, the proof of Propostion \ref{prop2} is completed.
\end{proof}

\vspace{.5cm}

\noindent{\bf{\large Acknowledgements.}} The author would like to express his gratitude to Professor Yanyan Li for his constant encouragement in this project. The author thank the anonymous referee for helpful suggestions which improved the exposition.

%%%%%%%%%%%%%%%%%%%%%%%%%%%%%%%%%%%%%%%%%%%%%%%%%%%%%%%%%%%%
%%%%%%%%%%%%%%%%%%%%%%%%%%%%%%%%%%%%%%%%%%%%%%%%%%%%%%%%%%%%%%

%

\end{document}